\documentclass[11pt,reqno]{amsart}

\usepackage{amsmath,amssymb,amsthm,mathtools,bm}
\usepackage{enumitem}
\usepackage{xcolor}
\usepackage{microtype}
\usepackage{comment}
\usepackage[
    colorlinks=true,
    linkcolor=blue!55!black,
    citecolor=blue!55!black,
    urlcolor=blue!55!black
]{hyperref}

\allowdisplaybreaks
\numberwithin{equation}{section}

\theoremstyle{plain}
\newtheorem{theorem}{Theorem}[section]
\newtheorem{lemma}[theorem]{Lemma}
\newtheorem{proposition}[theorem]{Proposition}
\newtheorem{corollary}[theorem]{Corollary}

\theoremstyle{definition}

\theoremstyle{remark}

\newcommand{\R}{\mathbb R}
\newcommand{\tr}{\operatorname{tr}}
\newcommand{\Div}{\operatorname{div}}

\newcommand{\ip}[2]{\left\langle #1,#2\right\rangle_{D_k}}
\newcommand{\dnorm}[1]{\lvert #1\rvert_{D_k}}

\title[Interior Curvature Estimates for Scalar Curvature Equations]
{Interior Curvature Estimates of Semi-convex Solutions for the scalar curvature equation  with Lipschitz Right-Hand Sides}

\author{Lichun Liang}
\address{
School of Mathematical Sciences, Chongqing Normal University, Chongqing, 401331, PR China
}
\email{lianglichun@cqnu.edu.cn}

\thanks{Corresponding author: Lichun Liang.}

\subjclass[2020]{35J60, 35B45, 35B65}

\keywords{
Hessian quotient equation,
interior Hessian estimate,
semi-convex solution,
Lipschitz right-hand side,
Jacobi inequality
}

\date{}

\begin{document}

\begin{abstract}
In this paper, let $u\in C^4(B_{10})$ with $D^2u\geq -KI$ define a $2$-admissible graph $M=\{(x,u(x)):x\in B_{10}\}\subset\R^{n+1}$ satisfying
\[ \sigma_2(\kappa[u])=f(x).
\]
We prove an interior curvature estimate depending on the Lipschitz norm of the right-hand sides.  The proof combines a shifted Jacobi inequality for
\(b=\log(H+J_0)\), a parallel hypersurface transformation that makes
the Newton tensor uniformly elliptic and local boundedness estimate then reduces the
pointwise bound to a weighted $L^1$ estimate, which is completed using the
Jacobi energy inequality and integration by parts.
\end{abstract}
\maketitle

\noindent\textbf{2020 Mathematics Subject Classification.}
35J60, 35B45, 35B65.

\noindent\textbf{Keywords.}
Prescribed scalar curvature; semi-convex graph; $2$-admissible solution;
interior curvature estimate; Lipschitz right-hand side.

\section{Introduction}
\noindent

For a graph $M=\{(x,u(x)):x\in B_{10}\}\subset\R^{n+1}$, we denote its
principal curvatures by $\kappa[u]=(\kappa_1,\ldots,\kappa_n)$.  We study
interior curvature estimates for the prescribed scalar curvature equation
\begin{equation}\label{eq:intro-main}
 \sigma_2(\kappa[u])=f(x)>0,
 \qquad \kappa[u]\in \Gamma_2,
\end{equation}
where \[
 \Gamma_2\Psi=\{\lambda\in\R^n:\sigma_1(\lambda)>0,
                         \ \sigma_2(\lambda)>0\}.
\]

Our main result is the following.

\begin{theorem}\label{thm:main}
Let $n\geq 2$, $K\geq 0$, $u\in C^4(B_{10})$ and $f\in C^2(B_{10})$ with $D^2u\geq -KI$ and $\inf_{B_{10}}f >0$. If $M=\{(x,u(x)):x\in B_{10}\}\subset\R^{n+1}$ is a smooth 2-convex graph satisfying the scalar curvature equation \eqref{eq:intro-main}, then we have 
$$\sup_{B_{1/2}}|\kappa(x)|\leq C(n, K, \inf_{B_{10}}f, \|f\|_{C^{0,1}(B_{10})}, \|Du\|_{L^{\infty}(B_{10})})$$
\end{theorem}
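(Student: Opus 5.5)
We follow the strategy announced in the abstract, adapting the Shankar--Yuan approach for the $\sigma_2$ equation to the curvature setting. Let $g$ be the induced metric on $M$ and $H=\sigma_1(\kappa)$. We set
\[
b=\log(H+J_0),
\]
where the shift $J_0$ is chosen large depending on $n$, $K$, $\inf f$ and $\|f\|_{C^{0,1}}$. Because $\sigma_2(\kappa)=f$ and $\kappa\in\Gamma_2$, we have $|\kappa|^2=H^2-2f\le H^2$, so a bound on $H$ bounds all curvatures.

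\textbf{Step 1: a shifted Jacobi inequality.} Differentiate the equation twice along $M$, using the Gauss and Codazzi equations. Use the concavity of $\sigma_2^{1/2}$ in the form $-\sigma_2^{pq,rs}h_{pq,k}h_{rs,k}\ge -|\nabla f|^2/f$-type bounds, together with the standard inequality $\sigma_2^{ij}h_{11i}h_{11j}$ versus $|\nabla H|^2$ (the Chou--Wang / Guan--Ren--Wang quantity). The goal is
\[
\sigma_2^{ij}b_{ij}\ \ge\ c(n)\,\sigma_2^{ij}b_ib_j \;-\; C\bigl(1+|\nabla f|\,\cdot\text{(lower order)}\bigr)
\]
in the viscosity or distributional sense. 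The shift $J_0$ is what absorbs the first-derivative terms of $f$. Only the Lipschitz norm of $f$ should appear, since the second derivatives $f_{kk}$ can be integrated by parts against test functions in the weak form instead of being estimated pointwise. This is why the statement lists only $\|f\|_{C^{0,1}}$ despite $f\in C^2$.

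\textbf{Step 2: uniform ellipticity via a parallel hypersurface.} The condition $D^2u\ge -KI$, together with the gradient bound, gives $\kappa_i\ge -C(K,\|Du\|_\infty)$. We then pass to the parallel hypersurface $M_t=\{X+t\nu\}$ with curvatures $\kappa_i/(1-t\kappa_i)$. Equivalently, we shift the tensor $h_{ij}+\Lambda g_{ij}\ge0$ to make it nonnegative. In these variables the Newton tensor $T_1=\sigma_1 g-h$ satisfies
\[
c\,I\le T_1/(H+J_0)\le C\,I.
\]
This holds because for semi-convex $\kappa\in\Gamma_2$ each $\sigma_1(\kappa|i)$ is comparable to $H$ once $H$ is large. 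Since $T_1$ is divergence free, the operator $\sigma_2^{ij}\nabla_{ij}=\nabla_i(T_1^{ij}\nabla_j)$ is in divergence form and, after normalizing, uniformly elliptic with ratio depending only on the allowed data.

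\textbf{Step 3: local boundedness.} With uniform ellipticity, the Jacobi inequality makes $b$ a subsolution, and $e^{b}$ is a weighted subsolution. The De Giorgi--Moser local maximum principle on the graph, which has the Michael--Simon Sobolev inequality available and controlled geometry since $|Du|$ is bounded, then gives
\[
\sup_{B_{1/2}} b\le C\int_{B_1} b^+\,(H+J_0)\,dv,
\]
that is, a weighted $L^1$ bound suffices.

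\textbf{Step 4: the integral estimate (the main obstacle).} Bound $\int_{B_1}(H+J_0)\,b$ using a cutoff $\varphi$ and the divergence structure:
\[
\int\varphi\,\sigma_2^{ij}b_{ij}=-\int T_1^{ij}\varphi_i b_j .
\]
Combine this with the Jacobi energy inequality $\int\varphi^2 T_1^{ij}b_ib_j\le C\int T_1^{ij}\varphi_i\varphi_j+\dots$. This reduces matters to $\int_{B_2}\sigma_1\,dv$ and $\int_{B_2}\sigma_2$-type quantities. These in turn are controlled by writing $\sigma_1\sqrt{1+|Du|^2}=\operatorname{div}(Du/\sqrt{1+|Du|^2})$ and integrating by parts, which gives a bound by $\|Du\|_\infty$, and likewise via the divergence structure of $\sigma_2$. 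The expected difficulty is the product $H\log H$ in the weight: one must absorb $\int\varphi (H+J_0)b$ using $b\le \varepsilon(H+J_0)^{\varepsilon}$ and Hölder, while keeping the $f$-derivative terms controlled only by $\|f\|_{C^{0,1}}$. I would first try to do this by inserting $b$ into the divergence identity for $\sigma_2$ and iterating on shrinking balls from $B_{10}$ down to $B_1$.
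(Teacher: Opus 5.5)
Your outline has the same architecture as the paper: a Jacobi inequality for $b=\log(H+J_0)$, a parallel hypersurface, De Giorgi, and a weighted $L^1$ bound. However, the reason you give for uniform ellipticity is false, and the key third-order inequality and the closure argument are missing.

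\textbf{Uniform ellipticity and the parallel hypersurface.} Your justification in Step 2 fails for the top curvature. For semi-convex $\kappa\in\Gamma_2$ with $H$ large, only $H-\kappa_i$ with $i\ge2$ is comparable to $H$. The top curvature satisfies $\kappa_1\ge H/n$ and $H-\kappa_1=\sigma_{1;1}(\kappa)\sim f/H$ (Lemma~\ref{lem:large-trace-spectrum}). So $F/(H+J_0)$, with $F=Hg-h$, has an eigenvalue of order $H^{-2}$. Moreover, dividing $\operatorname{div}_M(F\nabla\cdot)$ by the nonconstant factor $H+J_0$ does not leave a divergence-form operator. The parallel map $\Phi=Y\circ X^{-1}$, with $Y=X+t\nu$, $t=1/(4(K+1))$ and $d\Phi=P=I+tA$, is what cures this degeneracy. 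The weak inequality pushes forward to $\operatorname{div}_{\widetilde M}(\mathcal B\nabla b^*)$ with $\mathcal B=PFP/J$ and $J=\det P\sim 1+H$. The eigenvalues $(1+t\kappa_i)^2(H-\kappa_i)/J$ of $\mathcal B$ are bounded above and below, because $(1+t\kappa_1)^2\sim H^2$ compensates $H-\kappa_1\sim 1/H$. The weight $H+J_0$ in your Step 3 is the Jacobian $\det DT=J$ of the horizontal change of variables $y=T(x)$, not a normalization. Using those coordinates also requires proving that the parallel surface is a graph over a fixed ball.

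The transformation fixes a second problem you overlooked. Your Step 1 asserts a bounded error, but the geometric commutator term $\mathcal C(\kappa)=\sum_{i<j}\kappa_i\kappa_j(\kappa_i-\kappa_j)^2$ is, for non-convex $\kappa$, only bounded below by $-CH^2$, and this is sharp in general. After dividing by $H+J_0$ the error is therefore $-CH$. With only $L^1$ control of $H$, De Giorgi cannot absorb this. After the change of variables it becomes $-CH/J$, which is bounded.

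\textbf{The Jacobi inequality and the closure.} Since $F_{ij}b_{ij}=F_{ij}H_{ij}/(H+J_0)-F(\nabla H,\nabla H)/(H+J_0)^2$, you need $|\nabla A|^2-|\nabla H|^2\ge(1+c)\,F(\nabla H,\nabla H)/(H+J_0)-C|\nabla f|^2$ for some $c>0$. Concavity of $\sigma_2^{1/2}$ only gives $|\nabla A|^2-|\nabla H|^2\ge-|\nabla f|^2/(2f)$. The paper gets $c=1/6$ from the constrained quadratic form
$z_k^2+3\sum_{i\ne k}z_i^2-\bigl(1+\tfrac76\tfrac{\ell_k}{H+J_0}\bigr)\bigl(\sum_iz_i\bigr)^2\ge-C\bigl(\sum_i\ell_iz_i\bigr)^2$, where $\ell_i=H-\kappa_i$. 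It applies this to $z_i=h_{iik}$, so that $\sum_i\ell_iz_i=f_k$.

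This is where $J_0$ enters, not in absorbing $Df$ terms. On the bounded-$H$ region the form has a uniform gap, and the shift makes the required coefficient close to $1$ there. For large $H$, semi-convexity becomes negligible. Without the shift even the coefficient $1$ fails, as the paper's five-dimensional example shows.

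Finally, Step 4 is not yet an argument. Bounding $\int(H+J_0)b$ via $b\le C_\varepsilon(H+J_0)^\varepsilon$ and H\"older needs higher integrability of $H$, but only $\int H\,d\mu\le C$ (from $H=\operatorname{div}(Du/W)$) is available. The missing idea is to integrate $\chi bH=\chi b\operatorname{div}(Du/W)$ by parts. The gradient term is then estimated by $F$--$F^{-1}$ duality, with $|\zeta|\le1$:
$\int\chi\langle\nabla b,\zeta\rangle\le\bigl(\int\chi^2F(\nabla b,\nabla b)\bigr)^{1/2}\bigl(\int F^{-1}(\zeta,\zeta)\bigr)^{1/2}$.
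The Jacobi energy estimate bounds the first factor. The bound $F^{-1}\le (H/f)I$ from Lemma~\ref{lem1}, together with $\int H\le C$, bounds the second.
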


The problem originates in the  curvature estimates used in
the Weyl and Minkowski problems, see \cite{Nir53,Pog73,Pog78}.  Heinz's \cite{Heinz} two-dimensional estimate revealed some special properties of solutions to Monge--Amp\`ere type equations.   Unfortunately, Pogorelov \cite{Pog78}  constructed
singular convex solutions of the Monge--Amp\`ere equation in dimensions
$n\ge3$ and Urbas \cite{Urbas} produced analogous counterexamples for
the $k$-Hessian equation for $k\geq 3$.  Consequently, interior $C^2$ estimates for the $2$-Hessian equation become an important problem worthy of further investigation.

For the $2$-Hessian equation, the first  interior $C^2$ estimate is
the two-dimensional result of Heinz.  Warren-Yuan \cite{WarrenYuan} used the special Lagrangian structure  to obtain the interior $C^2$ estimate for $\sigma_2(D^2u)=1$ in dimension three.  Qiu \cite{QiuHessian,QiuCurvature} subsequently treated $C^{1,1}$ right-hand sides in dimension three for both the $2$-Hessian equation and the prescribed scalar curvature equation for graphs. In dimension four, Shankar-Yuan \cite{ShankarYuanFour} proved the interior $C^2$ estimate for $\sigma_2(D^2u)=1$ by combining an almost Jacobi inequality with a doubling argument; their method also gives a new proof in dimension three. Fan \cite{Fan} extended the four-dimensional result to
$C^{1,1}$ right-hand sides. In high dimensions $n\geq 5$, however, all known interior $C^2$ estimates require some  convexity assumption.  McGonagle-Song-Yuan \cite{McGonagleSongYuan} obtained interior $C^2$ estimates for almost convex solutions of  $\sigma_2(D^2u)=1$ by compactness.  Shankar-Yuan \cite{ShankarYuanSemiconvex} providing an integral approach to interior $C^2$ estimates for 
semi-convex solutions of $\sigma_2(D^2u)=1$. Also, under a dynamic semi-convexity condition, Shankar-Yuan \cite{ShankarYuanFour} obtained the interior $C^2$ estimate for  $\sigma_2(D^2u)=1$. For $C^{1,1}$ right-hand sides, the interior $C^2$ estimate was established by Guan-Qiu \cite{GuanQiu}  under the structural condition
$\sigma_3(D^2u)\ge-A$. Moreover, Mooney \cite{Mooney} established strict
$2$-convexity for convex solutions of  $\sigma_2(D^2u)=1$, thereby establishing the regularity for convex viscosity solutions. After sustained progress in weakening the regularity assumptions on the right-hand sides, \emph{a priori} estimates and regularity for  2-Hessian equations has now reached the classical Schauder theory. In a pioneering work, Zhou \cite{Zhou} established the regularity and interior $C^2$ estimates for viscosity solutions of $\sigma_2(D^2u)=f$ with $f\in C^{0,1}$ in dimension three by exploiting its connection with the twisted special Lagrangian equation. Subsequently, Chen-Jian-Zhou \cite{ChenJianZhou} established interior $C^2$ estimates in all dimensions for convex solutions of $\sigma_2(D^2u)=f$ with Lipschitz right-hand side $f$. Following this work, Chen-Jian-Tu-Zhou \cite{ChenJianTuZhou} established interior regularity for viscosity solutions of $\sigma_2(D^2u)=f$ with Lipschitz right-hand side $f$.
Li-Wu \cite{LiWuQuadratic} established interior $C^2$ estimates and regularity for 2-convex solutions of $\sigma_2(D^2u)=f$ with $f\in C^{0,1}$ in all dimensions $n\geq2$. Zhou-Zhu \cite{ZZ26} obtained the Schauder estimates and regularity for convex solutions of 2-Hessian equations. Chen-Zhou-Zhu \cite{CZZ26} completed the Schauder theory for \(2\)-convex solutions of the quadratic Hessian equation.

The geometric equation is technically more delicate for the reason that  the covariant second-derivative calculations generate curvature commutator terms that have no counterpart in the Hessian setting. Guan-Qiu \cite{GuanQiu} established curvature estimates for \eqref{eq:intro-main} in arbitrary dimensions under the condition \(\sigma_3(\kappa)\geq -A\), assuming that the prescribed curvature \(f\) belongs to \(C^2\). In dimension three, Qiu \cite{QiuCurvature} removed the convexity assumption and established an interior curvature estimate for  solutions of
\(\sigma_2(\kappa)=f(X,\nu)>0\) through an analogue of the special Lagrangian integral structure.  Chen–Jian–Zhou \cite{ChenJianZhou} obtained the curvature estimates for convex solutions with constants depending  on the Lipschitz norm of $f$.
Fan-Shankar \cite{FanShankar} treated $\sigma_2(\kappa)=1$ in dimension four by adapting the doubling method.  However, their bound is implicit through
the modulus of continuity of $Du$; for $n\geq5$ their argument requires the
dynamic condition $\kappa_{\min}/H\geq-c(n)$.  Finally, Qiu and Yan \cite{QiuYan} established curvature estimates for   admissible solutions to the constant graphical scalar curvature equation  in all dimensions $n\geq 3$.

These developments naturally raise the following  question.

\medskip
\noindent\textbf{Question.}
For the scalar curvature equation\[
\sigma_2(\kappa[u])=f(x)>0,\]
what is the minimal regularity required of the positive right-hand side \(f\) for an interior \emph{a priori} curvature estimate?

In this paper, our main aim is to giving an answer for the semi-convex solutions of scalar curvature equations with Lipschitz right-hand sides.  

We conclude the introduction with a brief overview of the main ideas underlying the proof of Theorem \ref{thm:main}. We follow a prevailing strategy developed in \cite{ChenJianZhou,ShankarYuanSemiconvex}, which comprises three key ingredients. Firstly, we establish a strong shifted Jacobi inequality for $\log (H+a)$ and reformulate it in divergence form. Secondly, using the parallel hypersurface transform together with a mean-value-type inequality, we control the pointwise value $\log (H(0)+a)$ by an appropriate integral quantity. Finally, we estimate the resulting integral terms through integration by parts. This idea has already been realized in the work of Ji-Liang \cite{JiLiang26}. 

The term $\Delta_M f$ appears in the trace--Jacobi identity for $H$ and is treated weakly in divergence form.  The curvature-commutator term, however, is bounded below only by \(-CH^2\) and the resulting quadratic error cannot be controlled in the subsequent integral argument.  Passing to the logarithmic Jacobi quantity divides the error by $H$ and reduces it to first order $H$, which will appear as the uniformly bounded term $H/J$
via parallel hypersurface transformation.

The negative gradient term generated by $\log H$ requires the third-order quadratic form to provide a coefficient strictly greater than $1$, which fails  under semi-convexity solutions. Indeed, in dimension five, for
\[
\kappa=\left(-\frac{11}{20},\frac{31}{80},\frac{31}{80},
\frac{31}{80},\frac{31}{80}\right),\qquad
z=\left(-\frac{49}{75},\frac{31}{75},\frac{31}{75},
\frac{31}{75},\frac{31}{75}\right),
\]
we have $H=1$, $\sigma_2(\kappa)=31/640>0$,
$\sum_i(H-\kappa_i)z_i=0$, and
\[
\frac{z_1^2+3\sum_{i\ne1}z_i^2-(\sum_i z_i)^2}
{\frac{H-\kappa_1}{H}(\sum_i z_i)^2}
=\frac{1072}{1125}<1.
\]
Thus even the coefficient $1$ fails for the unshifted form.
This motivates the shifted quantity
\[
b=\log(H+J_0).
\]
For bounded $H$, we can choose sufficiently  large $J_0$ to enlarge  the coefficient that must be supplied by the third-order quadratic form; for large $H$, the normalized bound $-K/H\to0$ recovers the convex case. Consequently, we obtain
\[
\Div_M(F\nabla b)
\geq cF(\nabla b,\nabla b)
+\Div_M\!\left(\frac{\nabla_M f}{H+J_0}\right)-CH.
\]
Here only $Df$ appears and the error $CH$ becomes bounded after the parallel change of variables.

The paper is organized as follows. Section~\ref{sec:spectral} establishes
the spectral estimates for semi-convex $\Gamma_2$-data.
Section~\ref{sec:algebra} proves the shifted third-order quadratic-form
inequality, and Section~\ref{sec:jacobi} derives the corresponding strong
and weak trace--Jacobi inequalities together with the local Jacobi energy
estimate. Section~\ref{sec:LL} constructs the parallel-hypersurface
transformation, proves the required graph representation and uniform
ellipticity and applies the De Giorgi estimate to reduce the pointwise
bound to a weighted integral. The final section proves the weighted
$L^1$ closure and completes the proof of Theorem~\ref{thm:main}.

\section{Semi-convex spectral structure}\label{sec:spectral}

\noindent

Throughout the paper, we denote
\[\sigma_{k;i_1\cdots i_l}(\lambda):=\left.\sigma_k(\lambda)
\right|_{\lambda_{i_1}=\cdots=\lambda_{i_l}=0}\]
for \(1\leq k\leq n\).

\begin{lemma}\label{lem1}
Let $n\geq 2$. Assume that $\lambda \in \Gamma_2$ and $\sigma_2(\lambda)=f$. Then we have 
$$\sigma_{1;i}(\lambda)\geq \frac{f}{\sigma_1(\lambda)},\ \ \ i\geq 1.$$
\end{lemma}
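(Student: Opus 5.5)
We prove the inequality with the identity $\sigma_2(\lambda)=\lambda_i\sigma_{1;i}(\lambda)+\sigma_{2;i}(\lambda)$, which holds for each fixed $i$, together with the elementary bound $\sigma_{2;i}\le \frac{n-2}{2(n-1)}\sigma_{1;i}^2$.

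Fix $i$ and write $s=\sigma_{1;i}(\lambda)=\sigma_1(\lambda)-\lambda_i$, so that $\sigma_1(\lambda)=\lambda_i+s$. First we check that $s>0$. This is the standard fact that $\Gamma_2\subset\{\sigma_{1;i}>0\}$. Directly, $\sigma_1^2-2\sigma_2=|\lambda|^2\ge\lambda_i^2$ gives $\sigma_1^2>\lambda_i^2$, and since $\sigma_1>0$ this yields $\sigma_1>|\lambda_i|\ge\lambda_i$.

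Next, by Newton--Maclaurin (or Cauchy--Schwarz) on the $n-1$ remaining entries, $\sigma_{2;i}\le \frac{n-2}{2(n-1)}s^2\le \frac12 s^2$. Substituting $\lambda_i=\sigma_1-s$ into the identity gives
\[
f=\sigma_2=(\sigma_1-s)s+\sigma_{2;i}\le \sigma_1 s-s^2+\tfrac12 s^2=\sigma_1 s-\tfrac12 s^2\le \sigma_1 s .
\]
Dividing by $\sigma_1>0$ gives $s\ge f/\sigma_1$.

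For $n=2$ we have $\sigma_{2;i}=0$, and the same computation applies directly. There is no real obstacle here. The only point requiring care is verifying $s>0$ and keeping the sign of $\lambda_i$ general, since semi-convexity allows $\lambda_i<0$. The argument never divides by $\lambda_i$, so it is sign-free.
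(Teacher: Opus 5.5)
Your proof is correct and is essentially the paper's argument: both rest on the identity $\sigma_1\sigma_{1;i}-\sigma_2=\sigma_{1;i}^2-\sigma_{2;i}$, which the paper evaluates exactly as $\frac12\bigl(\sigma_{1;i}^2+\sum_{j\ne i}\lambda_j^2\bigr)\ge 0$, while you bound $\sigma_{2;i}\le\frac12\sigma_{1;i}^2$ via Cauchy--Schwarz. Your separate check that $\sigma_{1;i}>0$ is harmless but unnecessary, since $\sigma_1 s-\frac12 s^2\le\sigma_1 s$ holds for either sign of $s$, and positivity of $s$ then follows from $f>0$.
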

\begin{proof}
  It follows that 
\begin{equation*}
  \begin{split}
     \sigma_{1;i}(\lambda) \sigma_1(\lambda)-\sigma _2(\lambda)=& (\lambda_i+\sigma_{1;i}(\lambda))\sigma_{1;i}(\lambda)-(\lambda_i \sigma_{1;i}(\lambda)+\sigma_{2;i}(\lambda)) \\
      =& \sigma_{1;i}^2(\lambda)-\sigma_{2;i}(\lambda)\\
      =& \sigma_{1;i}^2(\lambda)-\frac{1}{2}\left(\sigma_{1;i}^2(\lambda)-\sum_{j\ne i}\lambda_j^2\right)\\
      =&\frac12\left(\sigma_{1;i}^2(\lambda)+\sum_{j\ne i}\lambda_j^2\right)\geq 0.
  \end{split}
\end{equation*}
\end{proof}

\begin{lemma}[Large-trace spectral structure]\label{lem:large-trace-spectrum}
Let $n\geq 2$ and $K\geq 0$. Suppose that $\lambda\in \Gamma_2$ satisfies $\lambda_1\ge\cdots\ge\lambda_n\geq -K$ and $\sigma_2(\lambda)=f$ with $f_0\leq f\leq f_1$ for two constants $f_1\geq f_0>0$.  Then there exists  $S_0 >0$ depending only on
$n$, $K$, $f_0$ and $f_1$ such that whenever $\sigma_1(\lambda)\geq S_0$, we have 
$$\frac{\sigma_1(\lambda)}{n}\leq  \lambda_1\leq \sigma_{1}(\lambda),$$
$$-K\leq\lambda_i  \leq n \frac{2(f_1+(n-1)^2K^2)}{S_0}+(n-2)K,\qquad i\geq2,$$
\begin{equation}\label{eq3}
  \frac{f_0}{2\sigma_1(\lambda)}\leq  \sigma_{1;1}(\lambda)\leq  \frac{2n(f_1+(n-1)^2K^2)}{\sigma_1(\lambda)},
\end{equation}
$$\frac{1}{2}\sigma_1(\lambda)\leq \sigma_{1;i}(\lambda)\leq 2\sigma_1(\lambda),\qquad i\geq2.$$
In fact, we can take 
\begin{equation*}
  \begin{split}
    S_0&=\max \Big \{2n(n-1)K, \sqrt{\frac{n-2}{n-1} } \frac{2n(f_1+(n-1)^2K^2)}{\sqrt{f_0}},  \\
      & (n-2)K+\sqrt{4nf_1+(4n(n-1)^2+(n-2)^2)K^2}\Big \}.
  \end{split}
\end{equation*}

\end{lemma}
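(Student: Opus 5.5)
Write $\sigma_1=\sigma_1(\lambda)$. The plan is to control everything through the identity
\[
2\sigma_2(\lambda)=\sigma_1^2-|\lambda|^2 .
\]
This bounds how large the smaller eigenvalues can be. Then $\lambda_1$ is comparable to $\sigma_1$, and the remaining bounds follow from Lemma \ref{lem1} together with the lower bound $\lambda_i\ge -K$.

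\textbf{Step 1: bounds on $\lambda_1$.} The bound $\lambda_1\ge \sigma_1/n$ holds because $\lambda_1$ is the largest of $n$ numbers summing to $\sigma_1$. For the upper bound, write $\lambda_1=\sigma_1-\sigma_{1;1}$. By Lemma \ref{lem1}, $\sigma_{1;1}\ge f/\sigma_1>0$, so $\lambda_1\le\sigma_1$. This already gives the lower bound in \eqref{eq3}, since $f/\sigma_1\ge f_0/\sigma_1\ge f_0/(2\sigma_1)$.

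\textbf{Step 2: upper bound on $\sigma_{1;1}$.} Expand
\[
f=\sigma_2=\lambda_1\sigma_{1;1}+\sigma_{2;1}.
\]
Each negative eigenvalue lies in $[-K,0)$, so negative pairwise products in $\sigma_{2;1}$ are bounded below by $-K\,\lambda_j$ for positive $\lambda_j$. A crude count gives
\[
\sigma_{2;1}\ge -(n-2)K\sum_{\lambda_j>0,\,j\ge2}\lambda_j \;\ge\; -(n-2)K\big(\sigma_{1;1}+(n-1)K\big).
\]
Substituting,
\[
\sigma_{1;1}\big(\lambda_1-(n-2)K\big)\le f_1+(n-1)(n-2)K^2 .
\]
Since $\lambda_1\ge\sigma_1/n$ and $S_0\ge 2n(n-1)K$, we have $\lambda_1-(n-2)K\ge \sigma_1/(2n)$. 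This yields
\[
\sigma_{1;1}\le \frac{2n\big(f_1+(n-1)^2K^2\big)}{\sigma_1},
\]
which is the upper bound in \eqref{eq3}. I would check constants here carefully; the slack $(n-1)^2$ versus $(n-1)(n-2)$ suggests this is the intended route.

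\textbf{Step 3: bounds on $\lambda_i$, $i\ge2$.} For $i\ge2$,
\[
\lambda_i=\sigma_{1;1}-\sum_{j\ne1,i}\lambda_j\le \sigma_{1;1}+(n-2)K\le \frac{2n\big(f_1+(n-1)^2K^2\big)}{S_0}+(n-2)K,
\]
using $\sigma_1\ge S_0$. This is stronger than the stated bound, which carries an extra factor $n$. The lower bound $\lambda_i\ge -K$ is a hypothesis.

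\textbf{Step 4: bounds on $\sigma_{1;i}$, $i\ge2$.} Write $\sigma_{1;i}=\sigma_1-\lambda_i$. Step 3 gives $|\lambda_i|\le \sigma_{1;1}+(n-2)K$ when $\lambda_i\ge0$, and $|\lambda_i|\le K$ when $\lambda_i<0$. It then suffices to show $\sigma_{1;1}+(n-2)K\le \sigma_1/2$.

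\textbf{Main obstacle.} Step 4 is where the precise form of $S_0$ enters. Requiring
\[
\frac{2n\big(f_1+(n-1)^2K^2\big)}{\sigma_1}+(n-2)K\le \frac{\sigma_1}{2}
\]
is a quadratic inequality in $\sigma_1$. Its positive root is
\[
(n-2)K+\sqrt{(n-2)^2K^2+4n\big(f_1+(n-1)^2K^2\big)},
\]
which matches the third entry of $S_0$. I would verify this root computation carefully, and use the second entry of $S_0$ only if needed to make the $\lambda_i$ bound or the $f_0$ lower bound consistent.
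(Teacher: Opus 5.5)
Your proof is correct and follows essentially the paper's route. You split $f=\lambda_1\sigma_{1;1}(\lambda)+\sigma_{2;1}(\lambda)$, bound the negative mixed products in $\sigma_{2;1}(\lambda)$ using $\lambda_i\ge -K$ to get $\sigma_{1;1}(\lambda)\le 2n(f_1+(n-1)^2K^2)/\sigma_1(\lambda)$, and then impose the same quadratic condition that produces the third entry of $S_0$; your count with $(n-2)K$ is slightly sharper than the paper's $(n-1)K$.

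The one genuine simplification is the lower bound in \eqref{eq3}. You take it directly from Lemma~\ref{lem1}, which even gives $f_0/\sigma_1(\lambda)$. The paper instead derives it from $\sigma_{2;1}(\lambda)\le\frac{n-2}{2(n-1)}\sigma_{1;1}^2(\lambda)$, and that is the only place the second entry of $S_0$ is needed. One small correction: your Step~3 bound is identical to the stated one, not stronger, since $n\cdot\frac{2(f_1+(n-1)^2K^2)}{S_0}=\frac{2n(f_1+(n-1)^2K^2)}{S_0}$.
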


\begin{proof}
Set 
\begin{equation}\label{eq:p-mu-definition}
 \mu:=(\lambda_2,\ldots,\lambda_n),
 \qquad
 \sigma_2(\mu)=\sigma_{2;1}(\lambda).
\end{equation}
Since every component of $\mu$ is at least $-K$
and their sum is $\sigma_{1;1}(\lambda)$, for every $i\geq2$,
\begin{equation}\label{eq:tail-pointwise}
 -K\leq\lambda_i\leq \sigma_{1;1}(\lambda)+(n-2)K.
\end{equation}

We need to provide  estimates for $\sigma_2(\mu)$.  Let $\sigma_{1;1}^+(\lambda)$ be the sum of the positive components of $\mu$ and  $\sigma_{1;1}^-(\lambda)$ be  the absolute value of the sum of its negative components, that is, 
$$\sigma_{1;1}^+(\lambda)=\sum_{\substack{\lambda_i>0\\ i\geq 2}}\lambda_i,\qquad \sigma_{1;1}^-(\lambda)=-\sum_{\substack{\lambda_i<0\\ i\geq 2}}\lambda_i.$$
Then it is clear that 
\[  0\leq \sigma_{1;1}^-(\lambda) \leq(n-1)K\]
and therefore, by $\sigma_{1;1}(\lambda)=\sigma_{1;1}^+(\lambda)-\sigma_{1;1}^-(\lambda)$, 
we have 
$$\sigma_{1;1}^+(\lambda)=\sigma_{1;1}(\lambda)+\sigma_{1;1}^-(\lambda) \leq \sigma_{1;1}(\lambda)+(n-1)K.$$
Products of two positive components and products of two negative components
are nonnegative, whereas the sum of all mixed products equals $-\sigma_{1;1}^+(\lambda)  \sigma_{1;1}^-(\lambda)$.
Consequently, we get 
\begin{equation}\label{eq:q-lower-bound}
 \sigma_2(\mu)\geq-\sigma_{1;1}^+(\lambda)  \sigma_{1;1}^-(\lambda)
 \geq-(n-1)K\sigma_{1;1}(\lambda)-(n-1)^2K^2.
\end{equation}
 On the other hand, the Cauchy--Schwarz inequality gives
\[  |\mu|^2\geq\frac{\sigma_{1;1}^2(\lambda)}{n-1},
\]
and 
\begin{equation}\label{eq:q-upper-bound}
 \sigma_2(\mu)=\frac12(\sigma_{1;1}^2(\lambda)-|\mu|^2)
 \leq\frac{n-2}{2(n-1)}\sigma_{1;1}^2(\lambda).
\end{equation}
Splitting $\sigma_2(\lambda)$ with respect to $\lambda_1$ yields the exact
identity
\begin{equation}\label{eq:sigma2-splitting}
 f=\lambda_1 \sigma_{1;1}(\lambda)+ \sigma_2(\mu).
\end{equation}
This with \eqref{eq:q-lower-bound} leads to 
\[  (\lambda_1-(n-1)K)\sigma_{1;1}(\lambda) \leq f_1+(n-1)^2K^2. \]
Since $\lambda_1\geq \sigma_1(\lambda)/n$, by taking $\sigma_1(\lambda)\geq 2n(n-1)K$, we have $\lambda_1\geq 2(n-1)K$ and therefore 
\begin{equation}\label{eq:p-upper-first}
 \sigma_{1;1}(\lambda) \leq \frac{2(f_1+(n-1)^2K^2)}{\lambda_1}.
\end{equation}
Furthermore, we choose $\sigma_1(\lambda)/n>0$ such that 
$$\frac{n-2}{2(n-1)} \frac{4(f_1+(n-1)^2K^2)^2}{\lambda_1^2}\leq \frac{n-2}{2(n-1)} \frac{4n^2(f_1+(n-1)^2K^2)^2}{\sigma_1^2(\lambda)} \leq \frac{f_0}{2}. $$
 From
\eqref{eq:q-upper-bound} and \eqref{eq:sigma2-splitting}, we obtain
\begin{equation}\label{eq:lambda1p-lower}
 \lambda_1 \sigma_{1;1}(\lambda)=f-\sigma_2(\mu) \geq f_0-\frac{n-2}{2(n-1)}\sigma_{1;1}^2(\lambda)\geq\frac{f_0}{2}.
\end{equation}
In all, we can choose 
\begin{equation}\label{eq2}
  S_0=\max\left\{2n(n-1)K, \sqrt{\frac{n-2}{n-1} } \frac{2n(f_1+(n-1)^2K^2)}{\sqrt{f_0}}\right\},
\end{equation}
such that when $\sigma_1(\lambda)\geq S_0>0$ we have 
$$ \frac{f_0}{2}\leq \lambda_1 \sigma_{1;1}(\lambda)\leq  2(f_1+(n-1)^2K^2).$$
In particular, for every $\lambda_i$ ($i\geq2$), 
\eqref{eq:tail-pointwise} gives a uniform bound
\begin{equation}\label{eq1}
  \begin{split}
    -K\leq\lambda_i & \leq  \sigma_{1;1}(\lambda)+(n-2)K\\
      &  \leq \frac{2(f_1+(n-1)^2K^2)}{\lambda_1}+(n-2)K\\
      &\leq n \frac{2(f_1+(n-1)^2K^2)}{\sigma_{1}(\lambda)}+(n-2)K\\
      & \leq n \frac{2(f_1+(n-1)^2K^2)}{S_0}+(n-2)K.
  \end{split}
\end{equation}
From $\sigma_{1;1}(\lambda)>0$ and $\lambda_1\geq \sigma_1(\lambda)/n$, it follows that 
$$\frac{\sigma_1(\lambda)}{n}\leq  \lambda_1\leq \lambda_1+\sigma_{1;1}(\lambda)=\sigma_{1}(\lambda).$$
For $i\geq 2$, by \eqref{eq1} and $\sigma_{1;i}(\lambda)=\sigma_1(\lambda)-\lambda_i$, we see that 
$$\sigma_1(\lambda)-2n \frac{f_1+(n-1)^2K^2}{S_0}-(n-2)K\leq \sigma_{1;i}(\lambda)\leq \sigma_1(\lambda)+K.$$
Since $\sigma_1(\lambda)>S$, we have 
$$\left(1-2n\frac{f_1+(n-1)^2K^2}{S_0^2}-\frac{(n-2)K}{S_0}\right)\sigma_1(\lambda)\leq \sigma_{1;i}(\lambda)\leq \left(1+\frac{K}{S_0}\right)\sigma_1(\lambda).$$
If we choose $S>0$ such that 
$$2n\frac{f_1+(n-1)^2K^2}{S_0^2}+\frac{(n-2)K}{S_0}\leq \frac{1}{2},$$ then we get 
$$\frac{1}{2}\sigma_1(\lambda)\leq \sigma_{1;i}(\lambda)\leq 2\sigma_1(\lambda).$$
In view of \eqref{eq2},  taking 
\begin{equation*}
  \begin{split}
    S_0&=\max \Big \{2n(n-1)K, \sqrt{\frac{n-2}{n-1} } \frac{2n(f_1+(n-1)^2K^2)}{\sqrt{f_0}},  \\
      & (n-2)K+\sqrt{4nf_1+(4n(n-1)^2+(n-2)^2)K^2}\Big \}.
  \end{split}
\end{equation*}
we complete the proof.
\end{proof}

\section{The shifted semiconvex quadratic form}\label{sec:algebra}
\noindent

This section contains the central algebraic estimate.  
For a fixed $k\geq 1$, we let
\begin{equation}\label{eq:Dk}
 D_k:=\operatorname{diag}(3,\ldots,3),
 \quad (D_k)_{kk}=1,
 \qquad e:=(1,\ldots,1)^T.
\end{equation}
Thus, for $z\in\R^n$, define $$z^TD_kz:=z_k^2+3\sum_{i\ne k}z_i^2.$$

\begin{lemma}\label{lem:Rk}
Let $\lambda\in \Gamma_2$ and $\sigma_2(\lambda)=f$. Assume that $$\ell_i:=\sigma_1(\lambda)-\lambda_i,\ \ \ i=1,\ldots,n.$$
Then we have 
\begin{equation}\label{eq:R-def}
 R_k:=\max_{\substack{z\ne0\\ \ell\cdot z=0}}
      \frac{(e\cdot z)^2}{z^TD_kz}=\frac13\left[ n+2- \frac{\bigl((n-1)\sigma_1(\lambda)+2\ell_k\bigr)^2} {(n-1)\sigma_1^2(\lambda)-2f+2\ell_k^2} \right].
\end{equation}
\end{lemma}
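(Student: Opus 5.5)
The plan is to recognize $R_k$ as a constrained generalized Rayleigh quotient and to compute it by a change of variables that turns it into a Euclidean projection problem. Since $D_k$ is diagonal with positive entries, set $w:=D_k^{1/2}z$. Then $z^TD_kz=|w|^2$, $e\cdot z=a\cdot w$ and $\ell\cdot z=b\cdot w$, where
\[
a:=D_k^{-1/2}e,\qquad b:=D_k^{-1/2}\ell .
\]
The problem becomes maximizing $(a\cdot w)^2/|w|^2$ over nonzero $w\perp b$.

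First I will check that $b\neq0$. By Lemma~\ref{lem1}, each $\ell_i=\sigma_{1;i}(\lambda)\geq f/\sigma_1(\lambda)>0$, so $\ell\neq 0$. Also $b^\perp$ is nontrivial because $n\geq2$.

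For $w\perp b$ we have $a\cdot w=a'\cdot w$, where $a':=a-\frac{a\cdot b}{|b|^2}b$ is the orthogonal projection of $a$ onto $b^\perp$. Cauchy--Schwarz then gives $(a\cdot w)^2/|w|^2\leq|a'|^2$. Equality holds at $w=a'$ if $a'\neq0$; if $a'=0$, both sides vanish for every $w$. Hence
\[
R_k=|a'|^2=e^TD_k^{-1}e-\frac{(e^TD_k^{-1}\ell)^2}{\ell^TD_k^{-1}\ell}.
\]

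It remains to evaluate the three scalars, using $D_k^{-1}=\operatorname{diag}(1/3,\ldots,1,\ldots,1/3)$ with the $1$ in slot $k$.
\begin{itemize}
\item $e^TD_k^{-1}e=1+\frac{n-1}{3}=\frac{n+2}{3}$.
\item Since $\sum_i\ell_i=(n-1)\sigma_1(\lambda)$, we get $e^TD_k^{-1}\ell=\ell_k+\frac13\bigl((n-1)\sigma_1(\lambda)-\ell_k\bigr)=\frac13\bigl((n-1)\sigma_1(\lambda)+2\ell_k\bigr)$.
\item $\ell^TD_k^{-1}\ell=\frac13\bigl(|\ell|^2+2\ell_k^2\bigr)$.
\end{itemize}

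The only step with content is computing $|\ell|^2$ in terms of $f$. Expanding and using $|\lambda|^2=\sigma_1^2(\lambda)-2\sigma_2(\lambda)=\sigma_1^2(\lambda)-2f$ gives
\[
|\ell|^2=n\sigma_1^2(\lambda)-2\sigma_1^2(\lambda)+|\lambda|^2=(n-1)\sigma_1^2(\lambda)-2f .
\]
Substituting these values, the quotient term becomes
\[
\frac{\bigl((n-1)\sigma_1(\lambda)+2\ell_k\bigr)^2/9}{\bigl((n-1)\sigma_1^2(\lambda)-2f+2\ell_k^2\bigr)/3}
=\frac{\bigl((n-1)\sigma_1(\lambda)+2\ell_k\bigr)^2}{3\bigl((n-1)\sigma_1^2(\lambda)-2f+2\ell_k^2\bigr)},
\]
which yields \eqref{eq:R-def}.

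I expect no real obstacle. The main care points are justifying that the maximum is attained, which uses $a'$ or the degenerate case $a'=0$, and confirming that the denominator $\ell^TD_k^{-1}\ell$ is positive, which follows from $\ell\neq0$.
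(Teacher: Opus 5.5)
Your proposal is correct and follows essentially the same route as the paper. The paper takes the squared $D_k$-norm of the $D_k$-orthogonal projection of $D_k^{-1}e$ onto $\{\ell\cdot z=0\}$, which is your Euclidean projection after the substitution $w=D_k^{1/2}z$, and then evaluates the same three scalars using $|\ell|^2=(n-1)\sigma_1^2(\lambda)-2f$; your extra checks ($\ell\neq0$ via Lemma~\ref{lem1}, and attainment of the maximum including the case $a'=0$) fill in details the paper leaves implicit.
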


\begin{proof}
We equip $\mathbb{R}^n$ with the inner product $\langle v,w\rangle_{D_k}=v^TD_kw$. 
Since the quotient is homogeneous of degree zero in $t$, the original
maximization problem is equivalent to
\[R_k=\max\left\{(e\cdot z)^2:\ \ell\cdot z=0,\quad z^T D_k z=1\right\}.
\]
Since\[ \ip{D_k^{-1}\ell}{z}=\ell\cdot z,\]
the vector $D_k^{-1}\ell$ is a $D_k$-normal vector to 
\[\mathcal H:=\left\{z\in\R^n:\ell\cdot z=0\right\}.
\]
Let
\[q:=D_k^{-1}e.\]
Then
\[e\cdot z=\ip{q}{z}.\]
Therefore, the maximum is the squared $D_k$-norm of the orthogonal
projection of $q$ onto $\mathcal H$, i.e.,
\[\dnorm{q}^{2}-\frac{\ip{q}{D_k^{-1}\ell}^{2}}
     {\dnorm{D_k^{-1}\ell}^{2}}.\]
Using $q=D_k^{-1}e$, this becomes
\[e^T D_k^{-1}e
-\frac{\bigl(e^T D_k^{-1}\ell\bigr)^2}
     {\ell^T D_k^{-1}\ell}.\]

Moreover, since $D_k^{-1}$ has diagonal entry $1$ in the $k$th
position and $1/3$ in every other position, we have
\begin{align*}
 e^TD_k^{-1}e&=\frac{n+2}{3},\\
 e^TD_k^{-1}\ell
 &=\frac13\left(\sum_i\ell_i+2\ell_k\right)
   =\frac{(n-1)\sigma_1(\lambda)+2\ell_k}{3},\\
 \ell^TD_k^{-1}\ell
 &=\frac13\left(\sum_i\ell_i^2+2\ell_k^2\right)
   =\frac{(n-1)\sigma_1^2(\lambda)-2f+2\ell_k^2}{3},
\end{align*}
where $\sum_i\ell_i^2=(n-1)\sigma_1^2(\lambda)-2f$ was used in the last equalities.
Hence the maximum equals
\[\frac{1}{3}\left[n+2-\frac{\bigl((n-1)\sigma_1(\lambda)+2\ell_k\bigr)^2}
     {(n-1)\sigma_1^2(\lambda)-2f+2\ell_k^2}\right].\]
\end{proof}

\begin{lemma}[Uniform shifted gap]\label{lem:shifted-gap}
Let $n\geq 2$ and $K\geq 0$. Suppose that $\lambda\in \Gamma_2$ satisfies $\lambda_1\ge\cdots\ge\lambda_n\geq -K$ and $\sigma_2(\lambda)=f$ with $f_0\leq f\leq f_1$ for two constants $f_1\geq f_0>0$ and that $$\ell_i:=\sigma_1(\lambda)-\lambda_i,\ \ \ i=1,\ldots,n.$$
Then there exist  some constants $S_0>0$ and $J_0>0$ depending only on $n$, $K$, $f_0$ and $f_1$ such that
\begin{align}
 1-\left(1+(1+\theta)\frac{\ell_k}{\sigma_1(\lambda)}\right)R_k &\geq \frac{1-\theta}{6} &&\mbox{if}\ \ \sigma_1(\lambda )\geq S_0\ \ \mbox{and}\ \ k\geq2 ,\label{eq:gap-kge2}\\
 1-\left(1+(1+\theta)\frac{\ell_1}{\sigma_1(\lambda)}\right)R_1
 &\geq\left(\frac13-\theta\right)\frac{\ell_1}{\sigma_1(\lambda)} &&\mbox{if}\ \ \sigma_1(\lambda )\geq S_0,  \label{eq:gap-k1}\\
  1-\left(1+(1+\theta)\frac{\ell_k}{\sigma_1(\lambda)+J_0}\right)R_k& \geq \frac{\delta}{2} &&\mbox{if}\ \ \sigma_1(\lambda )\leq S_0 \ \ \mbox{and}\ \ k\geq1,
  \label{eq:gap-k3}
\end{align}
where $\theta\in (0,1/3)$ and $\delta=2f_0/S_0^2\in(0,1)$.
\end{lemma}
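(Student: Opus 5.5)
The plan is to start from the closed form for $R_k$ in Lemma~\ref{lem:Rk}. Write $s=\sigma_1(\lambda)$ and
\[
D_k^\ast:=(n-1)s^2-2f+2\ell_k^2=\textstyle\sum_i\ell_i^2+2\ell_k^2>0 .
\]
A direct expansion of $((n-1)s+2\ell_k)^2-(n-1)D_k^\ast$ gives the exact identity
\begin{equation*}
1-R_k=\frac{4(n-1)s\ell_k+2(n-1)f-(2n-6)\ell_k^2}{3D_k^\ast},
\end{equation*}
and all three estimates will be read off from it. The inputs are Lemma~\ref{lem1} ($\ell_k\ge f/s>0$) and, for $s\ge S_0$, the spectral picture of Lemma~\ref{lem:large-trace-spectrum}: $\lambda_k$ is bounded for $k\ge2$, and $\ell_1=\sigma_{1;1}$ satisfies \eqref{eq3}. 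I would allow $S_0$ to be enlarged beyond the value in Lemma~\ref{lem:large-trace-spectrum}, still depending only on $n,K,f_0,f_1$.

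\emph{Case $k\ge2$, $s\ge S_0$.} Here $\ell_k=s-\lambda_k$ with $|\lambda_k|\le C(n,K,f_0,f_1)$, so $\ell_k/s=1+O(1/s)$. Substituting gives $D_k^\ast=(n+1)s^2+O(s)$, and the numerator equals $4(n-1)s^2-(2n-6)s^2+O(s)=2(n+1)s^2+O(s)$. Hence $1-R_k=\tfrac23+O(1/s)$, i.e.\ $R_k=\tfrac13+O(1/s)$. Then
\[
1-\Bigl(1+(1+\theta)\tfrac{\ell_k}{s}\Bigr)R_k=1-\tfrac{2+\theta}{3}+O(1/s)=\tfrac{1-\theta}{3}+O(1/s).
\]
Choosing $S_0$ large makes the error at most $\tfrac{1-\theta}{6}$ (note $\tfrac{1-\theta}{6}\ge\tfrac19$ for $\theta<1/3$), which gives \eqref{eq:gap-kge2}.

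\emph{Case $k=1$, $s\ge S_0$.} Put $t=\ell_1/s$. By \eqref{eq3}, $\tfrac{f_0}{2s^2}\le t\le \tfrac{C}{s^2}$. The identity gives
\[
1-R_1=\frac{4(n-1)t+2(n-1)f/s^2-(2n-6)t^2}{3\bigl((n-1)-2f/s^2+2t^2\bigr)}=\tfrac43 t+\tfrac{2f}{3s^2}+O(s^{-4}).
\]
Since $R_1\le 1$, we get
\[
1-(1+(1+\theta)t)R_1\ge (1-R_1)-(1+\theta)t=\bigl(\tfrac13-\theta\bigr)t+\tfrac{2f}{3s^2}+O(s^{-4}).
\]
The positive term $\tfrac{2f_0}{3s^2}$ absorbs the $O(s^{-4})$ error once $S_0$ is large. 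This yields \eqref{eq:gap-k1}. The point of the argument is that the $f/s^2$ term is comparable to $t$ itself, so no loss occurs.

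\emph{Case $s\le S_0$.} The aim is first to show $1-R_k\ge\delta$ and then to use $J_0$ to kill the $\ell_k$-term. For the first part, the denominator satisfies $D_k^\ast\le (n+1)\sum_i\ell_i^2\le (n+1)(n-1)s^2\le C S_0^2$. In the numerator, the term $2(n-1)f\ge 2(n-1)f_0$ gives the required size, giving roughly $1-R_k\gtrsim f_0/S_0^2$, matching $\delta=2f_0/S_0^2$ after adjusting $S_0$ by constants. This works provided $\ell_k\bigl(4(n-1)s-(2n-6)\ell_k\bigr)\ge0$, i.e.\ $\ell_k\le \tfrac{2(n-1)}{n-3}s$. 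This holds when $\lambda_k\ge0$, since then $\ell_k\le s$.

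The main obstacle is the regime $\lambda_k<0$, where $\ell_k$ can be as large as $s+K$ and $s$ itself may be small. I would first try the crude bound $\ell_k\le s+K$ together with the lower bound $s^2\ge \tfrac{2n}{n-1}f_0$, enlarging $S_0$ in terms of $K$ if necessary. If that fails, I would fall back on a compactness argument: the set $\{\lambda\in\overline{\Gamma_2}:\lambda_i\ge -K,\ s\le S_0,\ f_0\le\sigma_2\le f_1\}$ is compact, and $R_k<1$ holds strictly on it because the numerator is a positive quantity in the identity (equivalently, strict Cauchy–Schwarz holds on $\{\ell\cdot z=0\}$). That gives some positive $\delta$, possibly with a different constant than $2f_0/S_0^2$.

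Finally, $0\le R_k\le e^TD_k^{-1}e=\tfrac{n+2}{3}$ and $\ell_k\le s+K\le S_0+K$. Hence
\[
(1+\theta)\frac{\ell_k}{s+J_0}R_k\le \frac{2(n+2)(S_0+K)}{3J_0}\le\frac{\delta}{2}
\]
for $J_0$ large, which proves \eqref{eq:gap-k3}.
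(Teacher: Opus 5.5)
Your large-trace cases are sound. For $k\ge2$ the expansion $R_k=\tfrac13+O(1/s)$ is exactly the limit the paper uses. For $k=1$, keeping the term $\tfrac{2f}{3s^2}$ to absorb the $O(s^{-4})$ error is a valid alternative to the paper's cruder bounds (numerator $\ge4(n-1)s\ell_1$, denominator $\le(n-1)s^2$ once $\ell_1^2\le f_0$).

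The gap is the case $s\le S_0$, which you never close. Your identity route needs $\ell_k\bigl(4(n-1)s-(2n-6)\ell_k\bigr)\ge0$, and you check this only for $\lambda_k\ge0$. The proposed repair via $\ell_k\le s+K$ and enlarging $S_0$ cannot work. For $n>3$ it requires $s\ge\frac{n-3}{n+1}K$, whereas $s$ is only bounded below by $\sqrt{2nf_0/(n-1)}$, and enlarging $S_0$ does not remove small $s$ from the region $\{s\le S_0\}$. The compactness fallback is circular: it asserts $R_k<1$ pointwise ``because the numerator is positive'', which is precisely the unproved sign claim, and the appeal to ``strict Cauchy--Schwarz'' is never made specific. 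Finally, even where the sign is fine, your estimate only yields $1-R_k\ge\frac{2f_0}{3(n+1)S_0^2}=\frac{\delta}{3(n+1)}$. Since $\delta$ is defined through the same $S_0$, no rescaling of $S_0$ upgrades this to $1-R_k\ge\delta$, and compactness would give an inexplicit constant. So \eqref{eq:gap-k3} with $\delta=2f_0/S_0^2$ is not established. A weaker constant would suffice downstream, but even that needs the sign issue resolved.

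The missing observation is that in $\Gamma_2$ one has $|\lambda|^2=s^2-2f<s^2$, hence $|\lambda_k|<s$ and $0<\ell_k<2s$. Your concern that $\ell_k$ may reach $s+K$ is therefore unfounded, and since $2<\frac{2(n-1)}{n-3}$ this already settles your sign condition. The paper avoids the identity altogether here. Because $\ell=se-\lambda$, the constraint $\ell\cdot z=0$ reads $s\,(e\cdot z)=\lambda\cdot z$. Cauchy--Schwarz then gives $(e\cdot z)^2\le\frac{|\lambda|^2}{s^2}|z|^2=\bigl(1-\frac{2f}{s^2}\bigr)|z|^2\le(1-\delta)\,z^TD_kz$, i.e. $R_k\le1-\delta$ with exactly $\delta=2f_0/S_0^2$. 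With $\ell_k<2S_0$ and $R_k\le1$, this gives $(1+\theta)\frac{\ell_k}{s+J_0}R_k\le\frac{2(1+\theta)S_0}{J_0}\le\frac\delta2$ for $J_0\ge4(1+\theta)S_0/\delta$, which is the third inequality.
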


\begin{proof}
\textbf{Case 1. $\sigma_1(\lambda)\geq S_0>0$.}
By Lemma~\ref{lem:large-trace-spectrum}, if $k\geq2$, then
\[
 \ell_k=\sigma_1(\lambda)-\lambda_k,\qquad |\lambda_k|\leq C.
\]
Consequently, we have 
\[\begin{aligned}
L_k &:=(n-1)\sigma_1^2(\lambda)-2f+2\ell_k^2 \\
&=(n-1)\sigma_1^2(\lambda)-2f+2(\sigma_1(\lambda)-\lambda_k)^2\\
&=(n+1)\sigma_1^2(\lambda)   -4\sigma_1(\lambda) \lambda_k+2\lambda_k^2-2f.
\end{aligned}\]
and 
$$
\begin{aligned}
N_k&:=\bigl((n-1)\sigma_1(\lambda)+2\ell_k\bigr)^2\\
&=\bigl((n+1)\sigma_1(\lambda)-2\lambda_k\bigr)^2\\
&=(n+1)^2\sigma_1^2(\lambda)   -4(n+1)\sigma_1(\lambda) \lambda_k+4\lambda_k^2.
\end{aligned}$$
Therefore, \eqref{eq:R-def}  gives
\[  R_k=\frac{1}{3}\left(n+2-\frac{N_k}{L_k}\right)\]
and \[
 \left(1+(1+\theta)\frac{\ell_k}{\sigma_1(\lambda)}\right)R_k
 =\frac{1}{3}\left(1+(1+\theta)\frac{\sigma_1(\lambda)+\lambda_k}{\sigma_1(\lambda)}\right)\left(n+2-\frac{N_k}{L_k}\right).
\]
Letting $\sigma_1(\lambda) \rightarrow \infty$, we have 
$$1-\left(1+(1+\theta)\frac{\ell_k}{\sigma_1(\lambda)}\right)R_k \rightarrow \frac{1-\theta}{3}.$$
This implies that there exists some constant $S_0>0$ depending only on $n$, $K$, $f_0$ and $f_1$ such that when $\sigma_1(\lambda)>S_0$, we have 
$$1-\left(1+(1+\theta)\frac{\ell_k}{\sigma_1(\lambda)}\right)R_k \geq \frac{1-\theta}{6}.$$
Direct computation gives 
\begin{equation}\label{eq:one-minus-R1}
 1-R_1=
 \frac{4(n-1)\sigma_1(\lambda) \ell_1+2(n-1)f-(2n-6)\ell_1^2}
 {3\bigl((n-1)\sigma_1^2(\lambda)-2f+2\ell_1^2\bigr)}.
\end{equation}
Choosing sufficiently large $S_0>0$, by $\sigma_{1;1}(\lambda)\leq  \frac{2n(f_1+(n-1)^2K^2)}{\sigma_1(\lambda)}$ in Lemma~\ref{lem:large-trace-spectrum}, we know that 
$$ \sigma_{1;1}^2(\lambda)\leq   f_0$$
and $$(n-3)\sigma_{1;1}^2(\lambda)\le(n-1)f_0.$$
Thus 
$$4(n-1)\sigma_1(\lambda) \ell_1+2(n-1)f-(2n-6)\ell_1^2\geq 4(n-1)\sigma_1(\lambda) \ell_1$$
and 
$$(n-1)\sigma_1^2(\lambda)-2f+2\ell_1^2\leq (n-1)\sigma_1^2(\lambda),$$
which immediately leads to 
$$1-R_1\geq \frac{4}{3}\frac{\ell_1}{\sigma_1(\lambda)}>0.$$
In particular, $R_1\leq1$ and therefore
\begin{align*}
 1-\left(1+(1+\theta)\frac{\ell_1}{\sigma_1(\lambda)}\right)R_1
 &=(1-R_1)-(1+\theta)\frac{\ell_1}{\sigma_1(\lambda)}R_1\\
 &\geq\left(\frac13-\theta\right)\frac{\ell_1}{\sigma_1(\lambda)}.
\end{align*}
\textbf{Case 2. $0<\sigma_1(\lambda)\leq S_0$.}
Since $\ell\cdot z=0$  and $\ell=\sigma_1(\lambda)e-\lambda$, we have 
\[\sigma_1(\lambda)(e\cdot z)=\lambda\cdot z.
\]
Hence
\begin{equation}\label{eq:compact-trace-gap}
 (e\cdot z)^2  \leq\frac{|\lambda|^2}{\sigma_1^2(\lambda)}|z|^2
 =\left(1-\frac{2f}{\sigma_1^2(\lambda)}\right)|z|^2
 \leq(1-\delta)|z|^2
\end{equation}
with $\delta=2f_0/S_0^2\in(0,1)$.  Since
$z^TD_kz\geq|z|^2$, $R_k\leq1-\delta$.  
 Choosing $J_0>0$ such that 
\begin{equation}\label{eq:J0-explicit}
 J_0\geq\frac{4(1+\theta)S_0}{\delta},
\end{equation}
by $0<\ell_k=\sigma_1(\lambda)-\lambda_k<2\sigma_1(\lambda)\leq2S_0$, we obtain
\[  (1+\theta)\frac{\ell_k}{\sigma_1(\lambda)+J_0}R_k
 \leq\frac{2(1+\theta)S_0}{J_0}\leq\frac\delta2,
\]
and therefore
\[
 1-\left(1+(1+\theta)\frac{\ell_k}{\sigma_1(\lambda)+J_0}\right)R_k=(1-R_k)-(1+\theta)\frac{\ell_k}{\sigma_1(\lambda)+J_0}R_k
 \geq\frac{\delta}{2}.
\]
\end{proof}

\begin{lemma}[Shifted trace quadratic form]\label{lem:quadratic}
Let $n\geq 2$ and $K\geq 0$. Suppose that $\lambda\in \Gamma_2$ satisfies $\lambda_1\ge\cdots\ge\lambda_n\geq -K$ and $\sigma_2(\lambda)=f$ with $f_0\leq f\leq f_1$ for two constants $f_1\geq f_0>0$ and that $$\ell_i:=\sigma_1(\lambda)-\lambda_i,\ \ \ i=1,\ldots,n.$$
Then there exist  some constants $C>0$ and $J_0>0$ depending only on $n$, $K$, $f_0$ and $f_1$ such that  
\begin{equation}\label{eq:quadratic}
 z_k^2+3\sum_{i\ne k}z_i^2
 -\left[1+\frac{7}{6}\frac{\ell_k}{\sigma_1(\lambda)+J_0}\right]
       \left(\sum_iz_i\right)^2
 \geq-C\left(\sum_i\ell_iz_i\right)^2
\end{equation}
 for  any $1\leq k\leq n$ and $z=(z_1,\ldots,z_n)\in\R^n$.
\end{lemma}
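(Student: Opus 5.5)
Write $Q_k(z):=z^TD_kz$, $P(z):=(\sum_i z_i)^2=(e\cdot z)^2$, $A_k:=1+\frac76\frac{\ell_k}{\sigma_1(\lambda)+J_0}$ and $\Lambda(z):=\ell\cdot z$. The plan is to show
\[ Q_k(z)-A_kP(z)\geq -C\Lambda(z)^2 \]
by a constrained-quadratic-form argument. Fix $\theta=1/6$, so that $1+\theta=7/6$ and the shifted coefficient in \eqref{eq:quadratic} is exactly the one appearing in Lemma~\ref{lem:shifted-gap}. Note also that $\frac{\ell_k}{\sigma_1(\lambda)+J_0}\leq\frac{\ell_k}{\sigma_1(\lambda)}$, so in the large-trace regime the bounds \eqref{eq:gap-kge2}–\eqref{eq:gap-k1} still apply with the shifted denominator. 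This holds because increasing the coefficient only decreases $1-(\cdots)R_k$ when $R_k\geq0$, and $R_k\ge 0$ trivially.

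\textbf{Step 1: the restriction to $\ker\Lambda$.} On the hyperplane $\mathcal H=\{\ell\cdot z=0\}$, Lemma~\ref{lem:Rk} gives $P\leq R_kQ_k$. Lemma~\ref{lem:shifted-gap} then gives
\[ Q_k-A_kP\geq (1-A_kR_k)Q_k\geq g_kQ_k, \]
where $g_k\geq\frac{1-\theta}{6}$, $g_k\geq(\frac13-\theta)\frac{\ell_1}{\sigma_1}$, or $g_k\geq\delta/2$ according to the case. So the form $B_k:=Q_k-A_kP$ is positive definite on $\mathcal H$.

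\textbf{Step 2: extension to all of $\R^n$.} Decompose $z=w+t\,v$, where $w\in\mathcal H$ and $v:=D_k^{-1}\ell/(\ell^TD_k^{-1}\ell)$, so that $\ell\cdot v=1$ and $t=\Lambda(z)$. Then
\[ B_k(z)=B_k(w)+2tB_k(w,v)+t^2B_k(v). \]
I will estimate the cross term with Cauchy–Schwarz. Since $v$ is the $D_k$-orthogonal complement direction, $Q_k(w,v)=0$, and hence $B_k(w,v)=-A_k(e\cdot w)(e\cdot v)$. Therefore
\[ |2tB_k(w,v)|\le \tfrac12 g_kQ_k(w)+\frac{2A_k^2(e\cdot v)^2}{g_k}\,\frac{(e\cdot w)^2}{Q_k(w)}\,t^2\le \tfrac12 g_kQ_k(w)+\frac{2A_k^2R_k(e\cdot v)^2}{g_k}t^2. \]
This gives
\[ B_k(z)\geq -\Big(\frac{2A_k^2(e\cdot v)^2}{g_k}+A_k(e\cdot v)^2\Big)\Lambda(z)^2. \]
It remains to bound $C_k:=(e\cdot v)^2/g_k$ uniformly. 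Here
\[ e\cdot v=\frac{e^TD_k^{-1}\ell}{\ell^TD_k^{-1}\ell}, \]
and $A_k\leq 1+\frac76\cdot2$ because $\ell_k\le 2\sigma_1$.

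\textbf{Step 3: uniform bounds (the main obstacle).} By Cauchy–Schwarz, $|e\cdot v|\lesssim (\ell^TD_k^{-1}\ell)^{-1/2}\lesssim 1/|\ell|$.
\begin{itemize}[leftmargin=*]
\item In the small-trace case $\sigma_1\le S_0$, $g_k\ge\delta/2$. Moreover $|\ell|^2=(n-1)\sigma_1^2-2f+\sigma_1^2\cdot 0\ldots$; concretely, $\sum\ell_i^2=(n-1)\sigma_1^2-2f$, and $\sigma_1^2>2f\ge 2f_0$ gives $|\ell|^2\geq (n-2)\sigma_1^2+\ldots\ge c(f_0)$. This needs care when $n=2$, where $\sum\ell_i^2=\lambda_1^2+\lambda_2^2=\sigma_1^2-2f$ could be small. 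I would instead use $\ell_i=\sigma_{1;i}\ge f/\sigma_1\ge f_0/S_0$ from Lemma~\ref{lem1}, which gives $|\ell|\ge f_0/S_0$ in all dimensions.
\item In the large-trace case with $k\ge2$, $|\ell|\gtrsim\sigma_1\ge S_0$ and $g_k\ge\frac{1-\theta}6$, so $C_k$ is bounded.
\item The delicate case is $k=1$ with large trace, where $g_1\sim\ell_1/\sigma_1$ degenerates like $\sigma_1^{-2}$ by \eqref{eq3}. Here I will use the sharper structure instead. The $D_1$-weighted sum is $e^TD_1^{-1}\ell\approx\frac{n-1}{3}\sigma_1$, while $\ell^TD_1^{-1}\ell\approx\frac{n-1}{3}\sigma_1^2$, so $(e\cdot v)^2\approx \frac{3}{(n-1)\sigma_1^2}$. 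Dividing by $g_1\gtrsim \ell_1/\sigma_1\gtrsim f_0/\sigma_1^2$ gives $C_1\lesssim 3/((n-1)f_0)$, which is bounded.
\end{itemize}
Verifying these asymptotics with explicit constants via Lemma~\ref{lem:large-trace-spectrum}, and in particular confirming that the $\sigma_1^{-2}$ degeneration of the gap is exactly compensated by $(e\cdot v)^2$, is the step I expect to need the most care.
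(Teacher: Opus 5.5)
Your proof is correct. It differs from the paper only in how the positivity on $\ell^\perp$ is extended to all of $\R^n$.

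\textbf{The paper's route.} It inverts the indefinite matrix $\mathbf M=D_k-aee^T$ by Sherman--Morrison and computes the exact constrained minimum $\min_{\ell\cdot z=\tau}z^T\mathbf Mz=\tau^2/d_k$. Here $-d_k=C_\ell(1-aR_k)/(aA_0-1)$, with $C_\ell=\ell^TD_k^{-1}\ell$ and $A_0=e^TD_k^{-1}e$. It then bounds $-d_k$ from below case by case.

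\textbf{Your route.} You split $z=w+(\ell\cdot z)v$ along the $D_k$-orthogonal direction $v=D_k^{-1}\ell/C_\ell$, so the $Q_k$ cross term vanishes. You absorb the remaining cross term $-2A_k(\ell\cdot z)(e\cdot w)(e\cdot v)$ by Young's inequality against $g_kQ_k(w)$, using $(e\cdot w)^2\le R_kQ_k(w)$, and drop $Q_k(v)\ge 0$. Since $(e\cdot v)^2=(e^TD_k^{-1}\ell)^2/C_\ell^2\le A_0/C_\ell$, your constant is controlled by $1/(C_\ell g_k)$. This is exactly the quantity the paper must bound, so the three cases coincide with the paper's. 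In the delicate case $k=1$ with large trace, $g_1\gtrsim\ell_1/\sigma_1\gtrsim f_0/\sigma_1^2$ is compensated by $C_\ell\gtrsim\sigma_1^2$.

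\textbf{Trade-off.} Your route avoids inverting $\mathbf M$, checking $aA_0\neq 1$, and the Lagrange-multiplier argument that the stationary point is the global constrained minimum. It costs only a non-sharp constant. The paper's route yields the optimal constant $1/|d_k|$.

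\textbf{Minor slips, none affecting validity.}
\begin{itemize}
\item Your worry about $n=2$ is unfounded: $|\ell|^2\ge(\sum_i\ell_i)^2/n=(n-1)^2\sigma_1^2/n\ge 2(n-1)f_0$ by Newton--Maclaurin, which is what the paper uses. Your alternative via Lemma~\ref{lem1} also works.
\item For $k=1$ one has $e\cdot v\approx 1/\sigma_1$, so $(e\cdot v)^2\approx\sigma_1^{-2}$, not $3/((n-1)\sigma_1^2)$. This is harmless. It is cleaner to skip asymptotics and use $(e\cdot v)^2\le A_0/C_\ell$ with $C_\ell\ge\frac{(n-1)^2}{3n}\sigma_1^2$ throughout.
\item Dropping $R_k$ from the Young constant requires $R_k\le 1$. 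This follows from $A_kR_k<1$ and $A_k\ge 1$.
\end{itemize}
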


\begin{proof}
Write $D=D_k$, $e=(1,\ldots,1)^T$, $a=1+\frac{7}{6}\frac{\ell_k}{\sigma_1(\lambda)+J_0}$ and
\[\mathbf M:=D-aee^T.\]
Then the left-hand side of \eqref{eq:quadratic} is $z^T\mathbf Mz$.

First we impose the homogeneous constraint $\ell\cdot z=0$.  By the definition
of $R_k$, we have 
\[
 (e\cdot z)^2\leq R_k\,z^TDz.
\]
Hence
\begin{equation}\label{eq:homogeneous-positive}
 z^T\mathbf Mz=z^TDz-a(e\cdot z)^2
 \geq(1-aR_k)z^TDz\geq0
\end{equation}
following from Lemma~\ref{lem:shifted-gap}.  Thus $\mathbf M$ is positive definite when
restricted to the hyperplane $\ell^\perp$.

We next remove the homogeneous constraint.  Let
\[\tau:=\ell\cdot z,
 \qquad A_0:=e^TD^{-1}e,
 \qquad B:=e^TD^{-1}\ell,
 \qquad C_\ell:=\ell^TD^{-1}\ell.
\]
Since $n\geq2$, we have 
\[
 A_0=\frac{n+2}{3}>1,
 \qquad a\geq1,
 \qquad aA_0-1\geq A_0-1=\frac{n-1}{3}>0.
\]
The Sherman--Morrison formula gives
\begin{equation}\label{eq:SM}
 \mathbf M^{-1}=D^{-1}+\frac{a}{1-aA_0}
                D^{-1}ee^TD^{-1}.
\end{equation}
Therefore
\begin{align}
 d_k:=\ell^T\mathbf M^{-1}\ell
 &=C_\ell+\frac{aB^2}{1-aA_0}\notag\\
 &=\frac{C_\ell(1-aA_0)+aB^2}{1-aA_0}
 =\frac{C_\ell(1-aR_k)}{1-aA_0}
 =-\frac{C_\ell(1-aR_k)}{aA_0-1}<0,
 \label{eq:dk}
\end{align}
where $R_k=A_0-B^2/C_\ell$ was used.

For completeness, minimizing $z^T\mathbf Mz$ subject to $\ell\cdot z=\tau$ gives the
Lagrange multiplier equations
\[ 2\mathbf Mz=\omega\ell,
 \qquad \ell\cdot z=\tau\qquad \mbox{for}\ \ \ \omega \in\mathbb{R}.
\]
Solving them with \eqref{eq:SM} yields
\begin{equation}\label{eq:constrained-minimizer}
 z_{0}=\frac{\tau}{d_k}\mathbf M^{-1}\ell,
 \qquad
 \min_{\ell\cdot z=\tau}z^T\mathbf Mz=\frac{\tau^2}{d_k}.
\end{equation}
This stationary point is the global constrained minimum because the
quadratic form is positive definite on the tangent hyperplane
$\ell^\perp$, as shown in \eqref{eq:homogeneous-positive}.  More precisely,
if $z_0=(\tau/d_k)\mathbf M^{-1}\ell$ and $v\in\ell^\perp$, then $\mathbf Mz_0$ is a
multiple of $\ell$ and hence
\[
 (z_0+v)^T\mathbf M(z_0+v)=z_0^T\mathbf Mz_0+v^T\mathbf Mv\geq z_0^T\mathbf Mz_0.
\]

It remains to prove a uniform lower bound for $-d_k$.  The denominator
$aA_0-1$ is bounded above. Indeed, from $0<\ell_k=\sigma_1(\lambda)-\lambda_k<2\sigma_1(\lambda)$, it follows that 
\[ 1\leq a\leq1+2(1+\theta),
\]
which implies that 
\begin{equation}\label{eq4}
  aA_0-1 \le
\left[1+2(1+\theta)\right]\frac{n+2}{3}-1.
\end{equation}
 On the compact region $0<\sigma_1(\lambda) \leq S_0$ from Lemma~\ref{lem:shifted-gap}, we know that 
 $$1-aR_k\geq \frac{f_0}{S_0^2}>0.$$
Using the Newton-Maclaurin inequality  and the Cauchy–Schwarz inequality, we have 
\begin{equation*}
  \begin{split}
     C_\ell& \geq\frac13|\ell|^2 \\
      & \geq \frac{1}{3n}\left(\sum_i\ell_i\right)^2\\
      &\geq \frac{(n-1)^2}{3n} \sigma_1^2(\lambda)\\
      & \geq \frac{(n-1)^2}{3n}\frac{2n}{n-1}f_0=\frac{2(n-1)}3f_0.
  \end{split}
\end{equation*} 
Hence 
$$-d_k\geq \frac{2(n-1)}{3S_0^2} \frac{f_0^2 }{\left[1+2(1+\theta)\right]\frac{n+2}{3}-1}.$$
On the large-trace region $\sigma_1(\lambda) \geq S_0$, it is clear that
\[ \frac{(n-1)^2}{3n}\sigma_1^2(\lambda) \leq C_\ell=\frac13\left((n-1)\sigma_1^2(\lambda)-2f+2\ell_k^2\right)\leq 
\frac{n+7}{3}\sigma_1^2(\lambda).
\]
If $k\geq2$, Lemma~\ref{lem:shifted-gap} gives
$1-aR_k\geq \frac{1-\theta}{6}$ and therefore 
$$-d_k\geq S_0^2 \frac{(n-1)^2}{3n}\frac{1-\theta}{6}\frac{1}{\left[1+2(1+\theta)\right]\frac{n+2}{3}-1}.$$
  If $k=1$, then
\eqref{eq:gap-k1} and \eqref{eq3} give
\[ C_\ell(1-aR_1)\geq \frac{(n-1)^2}{3n}\sigma_1^2(\lambda)\left(\frac13-\theta\right)\frac{\ell_1}{\sigma_1(\lambda)}
\geq \frac{(n-1)^2}{3n}\left(\frac13-\theta\right)\frac{f_0}{2}.\]
This together with \eqref{eq4} yields 
$$-d_k\geq \frac{(n-1)^2}{3n}\left(\frac13-\theta\right)\frac{f_0}{2}\frac{1}{\left[1+2(1+\theta)\right]\frac{n+2}{3}-1}.$$
Thus  \eqref{eq:constrained-minimizer}  implies that there exists some constant $C>0$ depending only on $n$, $K$, $f_0$ and $f_1$ such that 
\[ z^T\mathbf Mz\geq-C\tau^2,
\]
which yields  \eqref{eq:quadratic}.
\end{proof}

\begin{proposition}[Tensor form]\label{prop:tensor}
Let $n\geq 2$ and $K\geq 0$. Suppose that $\lambda\in \Gamma_2$ satisfies $\lambda_1\ge\cdots\ge\lambda_n\geq -K$ and $\sigma_2(\lambda)=f$ with $f_0\leq f\leq f_1$ for two constants $f_1\geq f_0>0$ and that $$\ell_i:=\sigma_1(\lambda)-\lambda_i,\ \ \ i=1,\ldots,n.$$
Let $\mathcal T=(\mathcal T_{ijk})$ be a completely symmetric three-tensor and define
\[ s_k:=\sum_i\mathcal T_{iik},
 \qquad q_k:=\sum_i\ell_i\mathcal T_{iik},
\]
then there exist  some constants $C>0$ and $J_0>0$ depending only on $n$, $K$, $f_0$ and $f_1$ such that  
\begin{equation}\label{eq:tensor}
 \sum_{i,j,k}\mathcal T_{ijk}^2-\sum_k s_k^2
 \geq\frac{7}{6}\frac{1}{\sigma_1(\lambda)+J_0}\sum_k \ell_k s_k^2
      -C\sum_kq_k^2.
\end{equation}
\end{proposition}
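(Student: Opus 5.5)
The plan is to reduce \eqref{eq:tensor} to Lemma~\ref{lem:quadratic}, applied one index $k$ at a time. I use only the diagonal-type entries $\mathcal T_{iik}$ and discard everything else, which has a nonnegative square.

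Fix $k$ and set $z^{(k)}_i:=\mathcal T_{iik}$ for $i=1,\dots,n$. Then $\sum_i z^{(k)}_i=s_k$ and $\sum_i\ell_i z^{(k)}_i=q_k$. The step I expect to need the most care is the lower bound for $\sum_{i,j,k}\mathcal T_{ijk}^2$, where complete symmetry must be used without double counting. Fix an ordered pair $(i,k)$ with $i\neq k$. The entry $\mathcal T_{iik}$ appears in the full sum at the three positions $(i,i,k)$, $(i,k,i)$ and $(k,i,i)$, which are distinct index triples. So it contributes $3\mathcal T_{iik}^2$.

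Now group the sum over all triples with at most two distinct indices, organized by the repeated index:
\begin{equation*}
\sum_{i,j,k}\mathcal T_{ijk}^2\;\geq\;\sum_k\mathcal T_{kkk}^2+\sum_{k}\sum_{i\neq k}3\,\mathcal T_{iik}^2 .
\end{equation*}
To check that nothing is counted twice, note that the triples $(i,i,k),(i,k,i),(k,i,i)$ with $i\ne k$ are exactly the triples of pattern $\{i,i,k\}$. Distinct ordered pairs $(i,k)$ give distinct multisets $\{i,i,k\}$. The triples with three distinct indices are simply dropped.

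Hence
\begin{equation*}
\sum_{i,j,k}\mathcal T_{ijk}^2\geq\sum_k\Bigl(\bigl(z^{(k)}_k\bigr)^2+3\sum_{i\ne k}\bigl(z^{(k)}_i\bigr)^2\Bigr)=\sum_k \bigl(z^{(k)}\bigr)^TD_kz^{(k)} .
\end{equation*}
For each $k$, apply Lemma~\ref{lem:quadratic} to $z=z^{(k)}$ with the same constants $C$ and $J_0$ it provides. This gives
\begin{equation*}
\bigl(z^{(k)}\bigr)^TD_kz^{(k)}\geq\Bigl[1+\tfrac76\tfrac{\ell_k}{\sigma_1(\lambda)+J_0}\Bigr]s_k^2-Cq_k^2 .
\end{equation*}
Summing over $k$ and subtracting $\sum_k s_k^2$ yields \eqref{eq:tensor} directly. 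The constants depend only on $n,K,f_0,f_1$ because those of Lemma~\ref{lem:quadratic} do.

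The main point to double-check is that the constant $J_0$ in Lemma~\ref{lem:quadratic} is the one chosen through Lemma~\ref{lem:shifted-gap} for some fixed $\theta\in(0,1/3)$ with $1+\theta\geq7/6$, for instance $\theta=1/6$. This ensures that the coefficient $7/6$ is covered for every $k$ simultaneously, so that a single $J_0$ serves all indices.
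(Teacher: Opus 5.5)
Your proposal is correct and follows the paper's proof essentially verbatim. The paper also uses complete symmetry to bound $\sum_{i,j,k}\mathcal T_{ijk}^2$ below by $\sum_k\bigl(\mathcal T_{kkk}^2+3\sum_{i\ne k}\mathcal T_{iik}^2\bigr)$, applies Lemma~\ref{lem:quadratic} with $\theta=1/6$ to $z_i=\mathcal T_{iik}$ for each $k$, and sums over $k$; your explicit count of the triples $(i,i,k),(i,k,i),(k,i,i)$ simply spells out the paper's one-line symmetry reduction.
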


\begin{proof}
Complete symmetry gives
\begin{equation}\label{eq:tensor-reduction}
 \sum_{i,j,k}\mathcal T_{ijk}^2 \geq
 \sum_k\left(\mathcal T_{kkk}^2+3\sum_{i\ne k}\mathcal T_{iik}^2\right).
\end{equation}
For each fixed $k\geq 1$ and $\theta=1/6$,  applying Lemma~\ref{lem:quadratic} to $z_i=\mathcal T_{iik}$ yields 
$$z_{k}^2+3\sum_{i\ne k}z_{i}^2-s_k^2\geq \frac{7}{6}\frac{1}{\sigma_1(\lambda)+J_0} \ell_k s_k^2
      -Cq_k^2.$$
Summing over $k$ and using \eqref{eq:tensor-reduction} proves 
\eqref{eq:tensor}.
\end{proof}

\section{Strong and weak trace--Jacobi inequalities}\label{sec:jacobi}
\noindent

Let \(X(x)=(x,u(x))\) be the position vector on \(M=\{(x,u(x)):x\in B_{10}\}\) and denote the outer (downward) unit normal of \(M\) by
\[\nu=\frac{1}{W}(Du,-1),\qquad
W=\sqrt{1+|Du|^2}.\]
Let \(\{E_1,\ldots,E_n,E_{n+1}\}\) be the standard orthonormal
coordinate frame of \(\mathbb{R}^{n+1}\). Sometimes we may choose an orthonormal principal frame $\{e_1,e_2,\ldots,e_n,\nu\}$
in \(\mathbb{R}^{n+1}\) such that \(\{e_1,e_2,\ldots,e_n\}\) are
tangent to \(M\) and \(\nu\) is the unit normal on \(M\). We continue to regard the prescribed function $f(x)$ as the pullback
$f\circ\pi$ to the graph under the horizontal projection
$\pi(X(x))=x$ and suppress the pullback symbol.
We use \(\nabla\) to denote the Levi--Civita connection on \(M\).
For any smooth function \(v\) on \(M\), we denote
\[v_i=\nabla_{e_i}v,
\qquad
v_{ij}=\nabla^2 v(e_i,e_j).\]
The following fundamental equations for hypersurfaces in
\(\mathbb{R}^{n+1}\) are well known:\[
\begin{aligned}
X_i &= e_i,\ \ 
X_{ij} = -h_{ij}\nu &&\text{(Gauss formula)},\\
\nu_i &= h_{ij}e_j
&&\text{(Weingarten equation)},\\
h_{ijk} &= h_{ikj}
&&\text{(Codazzi equation)},\\
R_{ijkl} &= h_{ik}h_{jl}-h_{il}h_{jk}
&&\text{(Gauss equation)}.
\end{aligned}
\]
Here \(h_{ij}\) is the second fundamental form of \(M\),
\[h_{ijk}=\nabla_{e_k}h_{ij},
\]
and \(R_{ijkl}\) is the curvature tensor of \(M\). We also have
the following commutator formula:
\[h_{ijkl}-h_{ijlk}
=\sum_m h_{im}R_{mjkl}+\sum_m h_{mj}R_{mikl}.
\]
Combining this with the Gauss and Codazzi equations, we have
\begin{equation}\label{eq5}
h_{iikk}=h_{kkii}
+\sum_m\left(h_{mi}^{\,2}h_{kk}-h_{mk}^{\,2}h_{ii}\right).
\end{equation}
In this orthonormal principal frame, the Weingarten map (shape operator) $A$ satisfies $$Ae_i=\kappa_i e_i$$ and then 
\[
h_{ij}=\langle Ae_i,e_j\rangle=\kappa_i\delta_{ij}
\]
for $\kappa=(\kappa_1,\ldots,\kappa_n)$. Thus, the scalar curvature equation becomes
\[\sigma_2(\kappa)=\sigma_2\bigl(\lambda(h_{ij})\bigr)=
\frac{1}{2}\left[\left(\sum_i h_{ii}\right)^2-\sum_{i,j}h_{ij}^2\right]
=\frac{1}{2}\left(H^2-|A|^2\right)=f,\]
where \(H\) and \(|A|\) denote the mean curvature and the norm of the
second fundamental form of \(M\), respectively. 
Taking the covariant derivative of the curvature equation with respect to $e_k$, we have 
$$F_{ij}h_{ijk}=f_k,$$
where the linearized operator 
$$F_{ij}:=\frac{\partial \sigma_2}{\partial h_{ij}}=H\delta_{ij}-h_{ij}.$$
The  Codazzi equations imply
\begin{equation}\label{eq:divF}
\nabla_i F_{ij}=0.
\end{equation}
Thus 
$$F_{ij}\nabla_i \nabla_j v=\Div_M(F\nabla v).$$
Differentiating the curvature equation twice gives 
$$F_{ij}h_{ijkk}-\sum_{i,j}h_{ijk}^2+\left(\sum_{i}h_{iik}\right)^2=f_{kk}.$$
We apply \eqref{eq5} to obtain
\begin{equation*}
  \begin{split}
    \sum_iF_{ii}H_{ii} & =\sum_{k,i}F_{ii}h_{kkii}=\sum_{k,i} F_{ii}(h_{iikk}+h_{kk}^2h_{ii}-h_{ii}^2h_{kk}) \\
      & =\Delta_Mf+\sum_{i,j,k}h_{ijk}^2-\sum_{k}\left(\sum_{i}h_{iik}\right)^2+\sum_{k,i} F_{ii}(h_{kk}^2h_{ii}-h_{ii}^2h_{kk}).
  \end{split}
\end{equation*}
By the aid of 
\begin{equation}\label{eq:newton-contractions}
 \sum_i F_{ii} h_{ii}=2\sigma_2=H^2-|A|^2,
 \qquad
 \sum_i F_{ii}h_{ii}^2
      =H|A|^2-\sum_ih_{ii}^3,
\end{equation}
we have 
\begin{align*}
 \sum_{i,k}F_{ii}(h_{kk}^2h_{ii}-h_{ii}^2h_{kk})
 &=|A|^2\sum_iF_{ii} h_{ii}
       -H\sum_iF_{ii}h_{ii}^2\\
 &=|A|^2(H^2-|A|^2)
       -H\left(H|A|^2-\sum_ih_{ii}^3\right)\\
 &=H\sum_i\kappa_i^3-\left(\sum_i\kappa_i^2\right)^2
       =\mathcal C(\kappa).
\end{align*}
It follows that 
\begin{equation}\label{eq:LFH-exact}
F_{ij}H_{ij}
 =\Delta_Mf+|\nabla A|^2-|\nabla H|^2+\mathcal C(\kappa).
\end{equation}
In fact, we write 
\begin{equation}\label{eq:C-detailed}
 \mathcal C(\kappa)
 : =\sum_{i<j}\kappa_i\kappa_j(\kappa_i-\kappa_j)^2.
\end{equation}

The commutator  is nonnegative for $\kappa\geq 0$, but not for  $\kappa \in \Gamma_2$.  The next lemma gives a lower bound for  $\mathcal C(\kappa)$.

\begin{lemma}\label{lem:commutator}
Let $n\geq 2$ and $K\geq 0$. Suppose that  $\kappa\in \Gamma_2$ satisfies $\kappa_1\geq\ldots \geq \kappa_n\geq -K$ and $\sigma_2(\kappa)=f$ with $f_0\leq f \leq f_1$ for two constants $f_1\geq f_0>0$.  Then there exists some constant $C>0$ depending only on $n$, $K$, $f_0$ and $f_1$ such that
\begin{equation}\label{eq:C-lower-detailed}
\mathcal C(\kappa)\geq-C H^2.
\end{equation}
\end{lemma}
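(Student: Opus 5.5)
We split according to the size of $H=\sigma_1(\kappa)$, using the threshold $S_0$ from Lemma~\ref{lem:large-trace-spectrum}, and work with the representation \eqref{eq:C-detailed}, $\mathcal C(\kappa)=\sum_{i<j}\kappa_i\kappa_j(\kappa_i-\kappa_j)^2$. Only pairs with $\kappa_i\kappa_j<0$ contribute negatively. Since $\kappa_j\geq -K$ for every $j$, each such pair is bounded below by $-K\,\kappa_i^+(\kappa_i-\kappa_j)^2$, where $\kappa_i^+$ is the positive entry of the pair.

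\textbf{Case $H\leq S_0$.} Here $|\kappa|^2=H^2-2f\leq S_0^2$, so every $|\kappa_i|$ is bounded by a constant depending only on $n,K,f_0,f_1$. Hence $|\mathcal C(\kappa)|\leq C$. To convert this into $-CH^2$, note that Newton--Maclaurin gives $H^2\geq \frac{2n}{n-1}f\geq \frac{2n}{n-1}f_0$. Therefore $\mathcal C\geq -C\geq -C'H^2$.

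\textbf{Case $H\geq S_0$.} By Lemma~\ref{lem:large-trace-spectrum}, $\kappa_1\in[H/n,H]$ and all $\kappa_i$ with $i\geq2$ are bounded, $|\kappa_i|\leq C_1$. We split $\mathcal C$ into pairs $(1,j)$ and pairs $(i,j)$ with $i,j\geq2$.

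The pairs $(i,j)$ with $i,j\geq 2$ involve only bounded entries, so their contribution is $O(1)$.

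For a pair $(1,j)$, the term is $\kappa_1\kappa_j(\kappa_1-\kappa_j)^2$. When $\kappa_j\geq0$ it is nonnegative. When $\kappa_j<0$, it is at least $-K\kappa_1(\kappa_1+K)^2\sim -KH^3$. This is cubic, so the crude bound is not enough.

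\textbf{Main obstacle: the cubic term.} We do not estimate the $(1,j)$ pairs individually but combine them exactly:
\[
\sum_{j\geq2}\kappa_1\kappa_j(\kappa_1-\kappa_j)^2=\kappa_1^3\sigma_{1;1}-2\kappa_1^2\sum_{j\ge2}\kappa_j^2+\kappa_1\sum_{j\ge2}\kappa_j^3 .
\]
By \eqref{eq3}, $\sigma_{1;1}\geq f_0/(2H)>0$, so the first term is at least $\kappa_1^3 f_0/(2H)\geq0$ and may be discarded. The second term is at least $-2H^2\sum_{j\geq2}\kappa_j^2\geq -CH^2$ because the tail entries are bounded. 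The third term is at least $-H(n-1)C_1^3$.

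Collecting all contributions gives $\mathcal C\geq -CH^2-CH-C$. Using $H\geq S_0>0$, this is bounded below by $-C'H^2$, which proves \eqref{eq:C-lower-detailed}. The key point is that the cubic term $\kappa_1^3\sigma_{1;1}$ carries the good sign because $\sigma_{1;1}>0$. It is the lower bound in \eqref{eq3} that rescues the large-trace case.
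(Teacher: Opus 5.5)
Your proof is correct and follows the paper's argument essentially verbatim. You use the same split at the large-trace threshold, with the small-trace case handled by boundedness of $\kappa$ plus Newton--Maclaurin. In the large-trace case your $(1,j)$ and $(i,j)$ groups are exactly the paper's expansion $\mathcal C=A_1\kappa_1^3-2A_2\kappa_1^2+\kappa_1A_3+(A_1A_3-A_2^2)$ with $A_m=\sum_{i\ge2}\kappa_i^m$, and the key step is the same: the cubic term carries the factor $A_1=\sigma_{1;1}(\kappa)>0$ and can be dropped, while the bounded tail entries make everything else $O(H^2)$.
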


\begin{proof}
Let $S$ be a large-trace threshold
for which Lemma~\ref{lem:large-trace-spectrum} holds.

If $H\leq S$, then $-K\leq\kappa_i<H\leq S$  following from $H-\kappa_i>0$ in $\Gamma_2$.  Hence  we have $|\mathcal C(\kappa)|\leq C$ for some constant $C>0$ depending only on $n$, $K$, $f_0$ and $f_1$.  On the
other hand, the Newton--Maclaurin inequality  gives
\begin{equation}\label{eq6}
  H^2\geq\frac{2n}{n-1}\sigma_2(\kappa)
       \geq\frac{2n}{n-1}f_0.
\end{equation}
Thus we have the lower bound 
$$\mathcal C(\kappa)\geq -C\geq -C\frac{n-1}{2nf_0}H^2.$$
Next, we consider $H>S$.  Set
\[  A_m:=\sum_{i=2}^n\kappa_i^m,\ \ \ m\geq 1.\]
From the bound for $\kappa$ in Lemma~\ref{lem:large-trace-spectrum}, we have  $|A_1|+|A_2|+|A_3|\leq C$ for some constant $C>0$ depending only on $n$, $K$, $f_0$ and $f_1$.  Since
$H=L+A_1$, direct expansion gives
\begin{align}
 \mathcal C(\kappa)
 &=(\kappa_1+A_1 )(\kappa_1^3+A_3)-(\kappa_1^2+A_2)^2\notag\\
 &=A_1 \kappa_1^3-2A_2\kappa_1^2+\kappa_1A_3+A_1 A_3-A_2^2.             \label{eq:C-expanded-detailed}
\end{align}
The leading term $A_1 L^3$ is nonnegative.  The remaining terms satisfy
\[ -2A_2\kappa_1^2+\kappa_1A_3+A_1 A_3-A_2^2
 \geq-C\kappa_1^2-C\kappa_1-C\geq-CH^2,\]
where   Lemma~\ref{lem:large-trace-spectrum} and \eqref{eq6} were used.  This
proves \eqref{eq:C-lower-detailed} in the large-trace region.
\end{proof}

\begin{proposition}[Strong shifted trace--Jacobi inequality]
\label{prop:strong-jacobi}
Let $n\geq 2$, $K\geq 0$, $u\in C^4(B_{10})$ and $f\in C^2(B_{10})$ with $D^2u\geq -KI$ and $\inf_{B_{10}}f >0$. If $M=\{(x,u(x)):x\in B_{10}\}\subset\R^{n+1}$ is a smooth 2-convex graph satisfying the scalar curvature equation \eqref{eq:intro-main}, then there exist some constants $J_0>1$ and $C>0$ depending only on $n$, $K$, $\inf_{B_{10}}f$ and $\sup_{B_{10}}f$ such that
\begin{equation}\label{eq:strong-b-detailed}
F_{ij}b_{ij}\geq \frac{1}{6} F_{ij}b_ib_j
       +\frac{\Delta_Mf}{H_0}
       -C\frac{|\nabla_Mf|^2}{h}-CH,
\end{equation}
where
\begin{equation}\label{eq:h-b-definitions}
  H_0:=H+J_0,
 \qquad   b:=\log H_0.
\end{equation}
\end{proposition}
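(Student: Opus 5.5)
We prove \eqref{eq:strong-b-detailed} at a point where we choose a principal frame, so $h_{ij}=\kappa_i\delta_{ij}$, $\kappa_1\ge\cdots\ge\kappa_n\ge -K'$ and $F_{ij}=\ell_i\delta_{ij}$ with $\ell_i=H-\kappa_i>0$. The lower bound $-K'$ depends only on $K$ and $\|Du\|_{L^\infty}$; if one insists on the constants listed, we use $D^2u\ge -KI$ together with the formula for $h_{ij}$ of a graph. We read the $h$ in the denominator of the statement as $H_0$.

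The starting point is the chain rule
\[
F_{ij}b_{ij}=\frac{F_{ij}H_{ij}}{H_0}-\frac{F_{ij}H_iH_j}{H_0^2}.
\]
We substitute the exact identity \eqref{eq:LFH-exact} into it. This gives
\[
F_{ij}b_{ij}=\frac{\Delta_Mf}{H_0}+\frac{|\nabla A|^2-|\nabla H|^2}{H_0}-\frac{\sum_k\ell_kH_k^2}{H_0^2}+\frac{\mathcal C(\kappa)}{H_0}.
\]
By Lemma~\ref{lem:commutator} the commutator term is at least $-CH^2/H_0\ge -CH$.

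The heart of the argument is the third-order term. We apply Proposition~\ref{prop:tensor} to the completely symmetric tensor $\mathcal T_{ijk}=h_{ijk}$; it is symmetric by Codazzi. With this choice $s_k=\sum_ih_{iik}=H_k$ and $q_k=\sum_i\ell_ih_{iik}=F_{ij}h_{ijk}=f_k$. Since $\sigma_1(\kappa)+J_0=H_0$, the proposition gives
\[
|\nabla A|^2-|\nabla H|^2\ge \frac{7}{6}\frac{1}{H_0}\sum_k\ell_kH_k^2-C|\nabla_Mf|^2 .
\]
Dividing by $H_0$ and combining with the negative gradient term, we obtain
\[
\frac{|\nabla A|^2-|\nabla H|^2}{H_0}-\frac{\sum_k\ell_kH_k^2}{H_0^2}\ge \frac16\sum_k\ell_k\frac{H_k^2}{H_0^2}-C\frac{|\nabla_Mf|^2}{H_0}.
\]
The first term on the right is exactly $\frac16F_{ij}b_ib_j$. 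Collecting the pieces yields \eqref{eq:strong-b-detailed}. The constants $J_0$ and $C$ are those of Lemma~\ref{lem:quadratic} and Lemma~\ref{lem:commutator}, with $f_0=\inf f$ and $f_1=\sup f$; we also enlarge $J_0$ so that $J_0>1$.

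The main obstacle is already absorbed into Proposition~\ref{prop:tensor}: it is the uniform positivity of the shifted quadratic form, with coefficient $7/6>1$, across both the small-trace and large-trace regimes. What remains is bookkeeping. We must check that the frame-dependent quantities are computed correctly, namely $F_{ij}H_iH_j=\sum_k\ell_kH_k^2$ and $q_k=f_k$, the latter following from differentiating the equation once. We must also verify that the ordering of the $\kappa_i$ required by Lemma~\ref{lem:quadratic} is harmless, since the tensor inequality is summed over every $k$ anyway.
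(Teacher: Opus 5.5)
Your proposal is correct and follows the paper's proof essentially step for step. Like the paper, you apply Proposition~\ref{prop:tensor} to $h_{ijk}$ with $s_k=H_k$ and $q_k=f_k$, then combine it with \eqref{eq:LFH-exact}, Lemma~\ref{lem:commutator} and the chain rule; the surplus $\frac76-1=\frac16$ yields the gradient term, and $H^2/H_0\le H$ handles the commutator. Your reading of $h$ as $H_0$ is the intended one. The hedge about $-K'$ is unnecessary, since $D^2u\ge -KI$ together with $g\ge I$ already gives $\kappa_i\ge -K/W\ge -K$.
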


\begin{proof}
Choose an orthogonal principal frame such that the second fundamental form is diagonal, i.e., $h_{ij}=\kappa_i\delta_{ij}$. Thus, the quantities 
\[|\nabla A|^2,\qquad
|\nabla H|^2,\qquad
F_{ij}H_iH_j,\qquad
|\nabla_M f|^2
\]
are invariant.
Set
\[ T_{ijk}:=h_{ijk}.
\]
In view of Codazzi equation, this tensor is completely
symmetric.  The quantities in Proposition~\ref{prop:tensor} become
\begin{align}
 s_k&=\sum_iT_{iik}=\sum_i h_{iik}=H_k,                  \label{eq:sk-Hk}\\
 q_k&=\sum_iF_{ii}T_{iik}
     =\sum_iF_{ii} h_{iik}=f_k.                         \label{eq:qk-fk}
\end{align}
Therefore, Proposition~\ref{prop:tensor}  yields
\begin{equation}\label{eq:third-order-lower-detailed}
 |\nabla A|^2-|\nabla H|^2
 \geq\frac{1}{H_0}\frac{7}{6} F_{ij}H_iH_j
      -C|\nabla_Mf|^2.
\end{equation}
Then, by \eqref{eq:LFH-exact} and Lemma \ref{lem:commutator}, we have 
\begin{equation}\label{eq:strong-H-detailed}
F_{ij}H_{ij}
 \geq \frac{1}{H_0}\frac{7}{6} F_{ij}H_iH_j +\Delta_Mf-C|\nabla_Mf|^2-CH^2.
\end{equation}
The chain rule gives
\begin{equation}\label{eq:chain-rule-b}
 F_{ij}b_{ij}=\frac{F_{ij}H_{ij}}{H_0}-\frac{F_{ij}H_iH_j}{H_0^2},
 \qquad
 F_{ij}b_ib_j=\frac{F_{ij}H_iH_j}{H_0^2}.
\end{equation}
Dividing \eqref{eq:strong-H-detailed} by $H_0$ and using 
\eqref{eq:chain-rule-b} and  $H^2/H_0\leq H$, we obtain the Jacobi inequality
\eqref{eq:strong-b-detailed}.
\end{proof}

\begin{proposition}[Weak shifted Jacobi inequality]
\label{prop:weak-jacobi}
Let $n\geq 2$, $K\geq 0$, $u\in C^4(B_{10})$ and $f\in C^2(B_{10})$ with $D^2u\geq -KI$ and $\inf_{B_{10}}f >0$. If $M=\{(x,u(x)):x\in B_{10}\}\subset\R^{n+1}$ is a smooth 2-convex graph satisfying the scalar curvature equation \eqref{eq:intro-main}, then there exists some constant  $C>0$ depending only on $n$, $K$, $\inf_{B_{10}}f$, $\sup_{B_{10}}f$ and $\|Df\|_{L^{\infty}(B_{10})}$ such that
\begin{equation}\label{eq:weak-jacobi-detailed}
 \Div_M(F\nabla b)
 \geq \frac{1}{12} F_{ij}b_ib_j+\Div_M\!\left(\frac{\nabla_Mf}{H_0}\right)-CH
\end{equation}
in the sense of distributions, where $b$ and $H_0$ come from Proposition \ref{prop:strong-jacobi}. 
\end{proposition}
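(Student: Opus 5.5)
Starting from the strong inequality \eqref{eq:strong-b-detailed}, I would rewrite each term in divergence form and then absorb the gradient error term. Since $F$ is divergence free by \eqref{eq:divF}, we have $F_{ij}b_{ij}=\Div_M(F\nabla b)$ pointwise, and $b$ is smooth because $u\in C^4$. The term $\Delta_M f/H_0$ is the only one that involves second derivatives of $f$, which we do not want to use quantitatively. I would use the identity
\[
\frac{\Delta_Mf}{H_0}=\Div_M\!\left(\frac{\nabla_Mf}{H_0}\right)+\frac{\langle\nabla_Mf,\nabla H\rangle}{H_0^2}
=\Div_M\!\left(\frac{\nabla_Mf}{H_0}\right)+\frac{\langle\nabla_Mf,\nabla b\rangle}{H_0}.
\]
This holds classically when $f\in C^2$, and hence also in the sense of distributions. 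For a general Lipschitz $f$ one would approximate, but all the constants should depend only on $\|Df\|_{L^\infty}$.

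The key step is to control the cross term $\langle\nabla_Mf,\nabla b\rangle/H_0$ by $\frac1{12}F_{ij}b_ib_j$ plus a term of size at most $CH$. I would work in the principal frame, write the cross term as $\sum_k f_kb_k/H_0$, and use Young's inequality with the weight $F_{kk}=H-\kappa_k$:
\[
\left|\frac{f_kb_k}{H_0}\right|\le \frac1{12}F_{kk}b_k^2+\frac{3f_k^2}{F_{kk}H_0^2}.
\]
Lemma~\ref{lem1} gives $F_{kk}=\sigma_{1;k}(\kappa)\ge f_0/H$. Hence the last term is at most $3|\nabla_Mf|^2H/(f_0H_0^2)\le C|\nabla_M f|^2$. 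Since $H\ge c(n,f_0)>0$ by the Newton--Maclaurin inequality \eqref{eq6}, this is at most $CH$.

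The gradient $|\nabla_Mf|$ is bounded by $|Df|$, because the pullback of a function of $x$ satisfies $|\nabla_M f|\le |Df|$ (the tangential projection only shrinks the horizontal gradient). Similarly, the term $C|\nabla_Mf|^2/H_0$ in \eqref{eq:strong-b-detailed} (the $h$ there I read as $H_0$) is bounded by a constant, hence by $CH$. Collecting terms gives
\[
\Div_M(F\nabla b)\ge \left(\tfrac16-\tfrac1{12}\right)F_{ij}b_ib_j+\Div_M\!\left(\frac{\nabla_Mf}{H_0}\right)-CH,
\]
which is \eqref{eq:weak-jacobi-detailed}.

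The main subtlety I expect is the Young's inequality step. $F$ degenerates in the $e_1$ direction when $H$ is large, since $F_{11}\sim f/H$ by \eqref{eq3}. The weight $1/F_{kk}$ could therefore produce an extra factor of $H$. The point is that the $H_0^{-2}$ coming from the logarithm compensates for it, leaving at most $O(H^{-1})$, so the final error is of order $H$ or better. A second point is the interpretation ``in the sense of distributions'' when $f$ is merely Lipschitz. I would state the inequality tested against nonnegative $\varphi\in C_c^\infty$, integrate by parts only in the divergence terms, and note that the constants involve only $\inf f$, $\sup f$ and $\|Df\|_{L^\infty}$.
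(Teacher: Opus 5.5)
Your proposal is correct and follows essentially the same route as the paper: rewrite $\Delta_M f/H_0$ via the product rule as $\Div_M(\nabla_M f/H_0)+\langle\nabla_M f,\nabla b\rangle/H_0$, absorb the cross term into half of the $\frac16F_{ij}b_ib_j$ by weighted Young's inequality using $F\geq(\inf_{B_{10}}f/H)I$ from Lemma~\ref{lem1}, and turn the leftover $|\nabla_Mf|^2/H_0$ into $CH$ using the Newton--Maclaurin lower bound on $H$. You are also right that the $h$ in \eqref{eq:strong-b-detailed} should be read as $H_0$; your per-component weight $F_{kk}$ in place of the paper's trace bound on $F^{-1}$ changes only the constant.
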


\begin{proof}
 Applying the product rule yields
\begin{align}
 \Div_M\!\left(\frac{\nabla_Mf}{H_0}\right)
 &=\frac{\Delta_Mf}{H_0}
   +\left\langle\nabla_Mf,\nabla_M\frac1H_0\right\rangle \notag\\
 &=\frac{\Delta_Mf}{H_0}
   -\frac{\langle\nabla_Mf,\nabla_Mb\rangle}{H_0}.
                                                               \label{eq:data-product-rule}
\end{align}
Hence
\begin{equation}\label{eq:data-divergence-detailed}
 \frac{\Delta_Mf}{H_0}
 =\Div_M\!\left(\frac{\nabla_Mf}{H_0}\right)
  +\frac{\langle\nabla_Mf,\nabla_Mb\rangle}{H_0}.
\end{equation}
Using weighted Young's inequality with $\varepsilon=1/12$ and $\tr(F^{-1})\leq \frac{nH}{\inf_{B_{10}}f}$ from Lemma \ref{lem1}, we obtain
\begin{equation}\label{eq:weighted-young-data}
  \begin{split}
\left|\frac{\langle\nabla_Mf,\nabla_Mb\rangle}{H_0}\right|&=\left|\frac{\langle F^{-1/2} \nabla_Mf,F^{1/2}\nabla_Mb\rangle}{H_0}\right|\\
 &\leq \frac{1}{12} F_{ij}b_ib_j
       +\frac{3}{ h^2}   (F^{-1})_{ij}f_if_j    \\
 &\leq    \frac{1}{12} F_{ij}b_ib_j  +\frac{3n}{ \inf_{B_{10}}f \ H_0}
   |\nabla_Mf|^2.   
  \end{split}
\end{equation}
 Since the Newton--Maclaurin inequality \eqref{eq6}, we can estimate 
 $$ \frac1H_0\leq\frac{n-1}{2n\inf_{B_{10}}f}H.$$
 Combining this with \eqref{eq:strong-b-detailed}, \eqref{eq:data-divergence-detailed} and \eqref{eq:weighted-young-data}, we obtain
 \begin{equation*}
 \begin{split}
    \Div_M(F\nabla b)
 &\geq \frac{1}{12} F_{ij}b_ib_j+\Div_M\!\left(\frac{\nabla_Mf}{H_0}\right)-\left(C+\frac{3n}{ \inf_{B_{10}}f}\right)\frac{ |\nabla_Mf|^2}{H_0}-CH \\
     & \geq \frac{1}{12} F_{ij}b_ib_j+ \Div_M\!\left(\frac{\nabla_Mf}{H_0}\right)-\left(C+\frac{3n}{\inf_{B_{10}}f}\right) \frac{n-1}{2n\inf_{B_{10}}f}|\nabla_Mf|^2H-CH
 \end{split}
\end{equation*}
for some  constant  $C>0$ depending only on $n$, $K$, $\inf_{B_{10}}f$, $\sup_{B_{10}}f$ and $\|Df\|_{L^{\infty}(B_{10})}$

Equivalently, every nonnegative $\phi\in C_c^1(M)$ satisfies
\begin{align} \label{eq:weak-test}
 -\int_M F_{ij} b_i\phi_j\,d\mu
\geq \frac{1}{12} \int_M F_{ij}b_ib_j\phi\,d\mu-\int_M \frac{1}{H_0}f_i\phi_i\,d\mu
 -C\int_M H\phi\,d\mu.            
\end{align}
\end{proof}

\begin{proposition}[Local Jacobi energy]\label{prop:energy}
Let $n\geq 2$, $K\geq 0$, $u\in C^4(B_{10})$ and $f\in C^2(B_{10})$ with $D^2u\geq -KI$ and $\inf_{B_{10}}f >0$. Assume that  $M=\{(x,u(x)):x\in B_{10}\}\subset\R^{n+1}$ is a smooth 2-convex graph satisfying the scalar curvature equation \eqref{eq:intro-main}. If $\eta\in C_c^1(B_2)$ and $0\leq\eta\leq1$, then there exists some constant  $C>0$ depending only on $n$, $K$, $\inf_{B_{10}}f$, $\sup_{B_{10}}f$, $\|Df\|_{L^{\infty}(B_{10})}$, $\|Du\|_{L^{\infty}(B_{2})}$ and $\|D\eta\|_{L^{\infty}(B_{2})}$ such that
\begin{equation}\label{eq:energy-fixed}
 \int_{M}\eta^2F_{ij}b_ib_j\,d\mu
 \leq C,
\end{equation}
where $b$ comes from Proposition \ref{prop:strong-jacobi}. 
\end{proposition}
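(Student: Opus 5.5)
Test the weak Jacobi inequality \eqref{eq:weak-test} with $\phi=\eta^2$, where $\eta$ is regarded as a function on $M$ via the pullback $\eta\circ\pi$. This gives
\[
\frac1{12}\int_M\eta^2F_{ij}b_ib_j\,d\mu\le -2\int_M\eta F_{ij}b_i\eta_j\,d\mu+2\int_M\frac{\eta}{H_0}f_i\eta_i\,d\mu+C\int_M H\eta^2\,d\mu .
\]
I would bound the three terms on the right in turn. For the first, Cauchy--Schwarz in the $F$-metric gives
\[
2|\eta F_{ij}b_i\eta_j|\le \tfrac1{24}\eta^2F_{ij}b_ib_j+24F_{ij}\eta_i\eta_j .
\]
The first piece is absorbed into the left side. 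For the second piece, since $F\le H\,I$ (all $\kappa_i>-K$, so $F_{ii}=H-\kappa_i\le H+K$), it is controlled by $C(H+K)|\nabla_M\eta|^2$, which is at most $CH$ on $\operatorname{supp}\eta$ by \eqref{eq6}. Here $|\nabla_M\eta|\le|D\eta|$ because the projection $\pi$ is $1$-Lipschitz. The second term is bounded pointwise by $C\|Df\|_\infty\|D\eta\|_\infty/J_0$, so it contributes at most $C\,\mathrm{Area}(M\cap\pi^{-1}(B_2))$. This area is $\int_{B_2}W\,dx\le C(n,\|Du\|_{L^\infty(B_2)})$.

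Everything therefore reduces to the mean curvature bound $\int_{M\cap\pi^{-1}(B_2)}H\,d\mu\le C$. This is the standard step, and I would prove it in divergence form. Write $HW=\operatorname{div}_{\mathbb R^n}(Du/W)\cdot W$ (up to sign convention), so that $H\,d\mu=\operatorname{div}(Du/W)\,dx$. Take a cutoff $\psi\in C_c^1(B_3)$ with $\psi=1$ on $B_2$. The key point is that $H>0$ in $\Gamma_2$, so $H$ is nonnegative and we may integrate it against $\psi$. Then
\[
\int_{B_3}\psi\,\operatorname{div}\!\left(\frac{Du}{W}\right)dx=-\int_{B_3}\left\langle D\psi,\frac{Du}{W}\right\rangle dx\le C\|D\psi\|_\infty ,
\]
since $|Du/W|<1$. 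This gives $\int_{M\cap\pi^{-1}(B_2)}H\,d\mu\le C(n)$, which is in fact independent of $\|Du\|$. The same bound handles $\int H\eta^2$ and $\int H|D\eta|^2$.

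Collecting the three estimates yields \eqref{eq:energy-fixed}, with $C$ depending on $n$, $K$, $\inf f$, $\sup f$, $\|Df\|_{L^\infty}$, $\|Du\|_{L^\infty(B_2)}$ and $\|D\eta\|_{L^\infty}$. I expect the only delicate points to be bookkeeping: the sign convention relating $H$ to $\operatorname{div}(Du/W)$ with the downward normal, which must make $H=\sigma_1(\kappa)>0$ match a nonnegative divergence, and justifying the test function in \eqref{eq:weak-test}. The latter needs $\eta^2\circ\pi$ to be $C^1_c$ on $M$, which holds because $M$ is a graph over $B_{10}\supset B_2$. The main obstacle is really the $L^1$ bound on $H$, and positivity of $H$ makes it straightforward.
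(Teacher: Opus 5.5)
Your argument is essentially the paper's proof. You test \eqref{eq:weak-test} with $\phi=\eta^2$, absorb the cross term by Young's inequality with weight $\tfrac1{24}$, and bound the $f$-flux term by $\|Df\|_{L^\infty}\|D\eta\|_{L^\infty}$ times the area of $M$ over $B_2$. Everything then reduces to an $L^1$ bound for $H$ coming from $H=\operatorname{div}(Du/W)>0$. Where the paper uses $\operatorname{tr}F=(n-1)H$ and the divergence theorem on $\partial B_2$, you use $F\le(H+K)I$ with \eqref{eq6} and a cutoff on $B_3$; the two are interchangeable.

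One step is wrong as written, though harmless. Since $d\mu=W\,dx$, one has $H\,d\mu=W\operatorname{div}(Du/W)\,dx$, not $\operatorname{div}(Du/W)\,dx$. Using $H>0$ twice, the correct chain is
\[
\int_{M\cap\pi^{-1}(B_2)}H\,d\mu=\int_{B_2}HW\,dx\le\|W\|_{L^\infty(B_2)}\int_{B_3}\psi H\,dx\le C(n)\,\|W\|_{L^\infty(B_2)} .
\]
So your claimed independence of $\|Du\|$ does not follow. The bound depends on $\|Du\|_{L^\infty(B_2)}$, which the proposition permits.

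Also, $F\le HI$ fails once some $\kappa_i<0$. The bound to use is your parenthetical $F\le(H+K)I$, or equivalently $F\le(\operatorname{tr}F)\,I=(n-1)H\,I$, which holds since $F>0$.
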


\begin{proof}
Take $\phi=\eta^2$ in \eqref{eq:weak-test}.  Since
$\nabla(\eta^2)=2\eta\nabla\eta$, we obtain
\begin{align}
 \frac{1}{12}\int_{M}\eta^2F_{ij}b_ib_j\,d\mu
 &\leq-2\int_{M}\eta F_{ij}b_i\eta_j\,d\mu\notag\\
 &\quad+2\int_{M}\frac{1}{H_0} \eta f_i\eta_i\,d\mu
      +C\int_{M}H\eta^2\,d\mu.             \label{eq:energy-expanded}
\end{align}
Since $F_{ij}$ is positive definite,  Young's inequality with $\epsilon=\frac{1}{24}$ yields
\[ 2
 |\eta F_{ij}b_i\eta_j |
 \leq \frac{1}{24}\eta^2 F_{ij}b_ib_j+24F_{ij}\eta_i\eta_j,
\]
which implies that 
\begin{align*}
 2\left|\int_{M}\eta F_{ij}b_i\eta_j\,d\mu\right|
 \leq\frac{1}{24}\int_{M}\eta^2 F_{ij}b_ib_j\,d\mu
+24\int_{M}F_{ij}\eta_i\eta_j\,d\mu.
\end{align*}
Furthermore, by $1/H_0\leq 1$ and $|\nabla_Mf|\leq \|Df\|_{L^{\infty}(B_{10})}$, we have 
\[ 2\left|\int_{M}\frac{1}{H_0} \eta f_i\eta_i\,d\mu\right|
 \leq 2\|Df\|_{L^{\infty}(B_{10})}\int_{M}\eta|\nabla_M\eta|\,d\mu.
\]
Absorbing the first term into the left-hand side of
\eqref{eq:energy-expanded} proves 
\begin{equation}\label{eq:energy-general}
\begin{split}
  \frac{1}{24}\int_{M}\eta^2F_{ij}b_ib_j\,d\mu & \leq 24\int_{M}F_{ij}\eta_i\eta_j\,d\mu \\
    &  +C\int_{M}H\eta^2\,d\mu
      +2\|Df\|_{L^{\infty}(B_{10})}\int_{M}\eta|\nabla_M\eta|\,d\mu.
\end{split}
\end{equation}
Observing  that $\tr F=(n-1)H$, we have 
$$F_{ij}\eta_i\eta_j\leq (n-1)H |\nabla_M\eta|^2$$
 Since $H>0$, $d\mu=\sqrt{1+|Du|^2}dx$, and
$H=\Div(Du/\sqrt{1+|Du|^2})$, we have 
\begin{equation}\label{eq:L1-s}
\begin{aligned}
 \int_{M\cap\{x\in B_{2}\}}H\,d\mu
 &=\int_{B_2} H  \sqrt{1+|Du|^2}\,dx  \\
 &\leq\|\sqrt{1+|Du|^2}\|_{L^\infty(B_2)}\int_{B_2} \Div \left (\frac{Du}{\sqrt{1+|Du|^2}}\right)\,dx\\
 &=\|\sqrt{1+|Du|^2}\|_{L^\infty(B_2)}\int_{ \partial B_2}\frac{Du}{\sqrt{1+|Du|^2}}\cdot\nu\,dS_x\\
 & \leq |\partial B_2|\|\sqrt{1+|Du|^2}\|_{L^\infty(B_2)}.
\end{aligned}
\end{equation}
 Hence 
 $$\int_{M}H\eta^2\,d\mu\leq |\partial B_2|\|\sqrt{1+|Du|^2}\|_{L^\infty(B_2)}$$
 and
  \begin{equation*}
   \begin{split}
     \int_{M}F_{ij}\eta_i\eta_j\,d\mu & \leq (n-1)\int_{M\cap\{x\in B_{2}\}}H |\nabla_M\eta|^2\,d\mu \\
       & \leq (n-1)|\partial B_2|\|D\eta\|_{L^\infty(B_2)}^2\|\sqrt{1+|Du|^2}\|_{L^\infty(B_2)}.
   \end{split}
 \end{equation*}
Also, we have 
\begin{equation}\label{eq:graph-volume}
 \mu(M\cap\{x\in B_2\})=\int_{B_2} \sqrt{1+|Du|^2}\,dx
 \leq| B_2|\|\sqrt{1+|Du|^2}\|_{L^\infty(B_2)}.
\end{equation}
Thus, we  prove \eqref{eq:energy-fixed}.
\end{proof}

\section{The semi-convex parallel-hypersurface  transformation}\label{sec:LL}
\noindent

Let
\[
 X(x)=(x,u(x)),
 \qquad
 \nu(x)=\frac{(Du(x),-1)}{W(x)},
 \qquad
 W(x)=\sqrt{1+|Du(x)|^2},
\]
and 
\[ Y(x):=X(x)+t\nu(x)     =\left(T(x),u(x)-\frac{t}{W(x)}\right),
 \qquad y=T(x):=x+t\frac{Du(x)}{W(x)},
\]
where $t:=\frac{1}{4(K+1)}$.
We set
\begin{equation}\label{eq:P-J-definitions}
 P:=I+tA,
 \qquad \alpha_i:=1+t\kappa_i,
 \qquad J:=\det P=\prod_i\alpha_i,
\end{equation}
where $\kappa_1\geq \ldots\geq \kappa_n\geq -K$ are   the eigenvalues of $A$. The semi-convexity bound $\kappa_i\geq-K$ gives
\begin{equation}\label{eq:A-positive}
 P\geq a_0I,
 \qquad a_0:=1-tK\geq\frac34.
\end{equation}
By $d\nu=A\circ dX$, we have 
\[ dY=P\circ dX.\]
Consider parallel hypersurface $\widetilde M=\{Y(x): x\in B_{10}\}$. Then
\begin{equation}\label{eq7}
  \Phi=Y \circ X^{-1}: M\to \widetilde M
\end{equation}
and $$d\Phi=P.$$
Let \(g\) and \(\widetilde g\) be the metrics on \(M\) and
\(\widetilde M\) induced by the ambient Euclidean inner product
\(\langle\cdot,\cdot\rangle_{\mathbb R^{n+1}}\), that is,   
\[
 g(v,w)
 =\widetilde g(v,w)
 =\langle v,w \rangle_{\mathbb R^{n+1}}.
\]

\begin{lemma}[Local graph property]\label{lem:T-jacobian}
The map $T$ is a local $C^2$ diffeomorphism and
\begin{equation}\label{eq:det-horizontal}
 J(x)=\det DT(x)>0,
 \qquad x\in B_{10}.
\end{equation}
Let $U\subset B_{10}$ be any open set such that
$T|_U:U\to V:=T(U)$ is a $C^2$ diffeomorphism.
Then the restriction $Y|_U$ parametrizes the graph of a function
$\widetilde u\in C^2(V)$; that is,
\[
 Y(U)=\{(y,\widetilde u(y)):y\in V\}.
\]
Moreover,
\begin{equation}\label{eq:same-slope}
  D_y\widetilde u(y)=Du(x),
 \qquad y=T(x), \qquad x\in U.
\end{equation}
In particular, the graph factors of the original and parallel graphs
agree at corresponding points:
\[
 \sqrt{1+|D_y\widetilde u(T(x))|^2}
 =\sqrt{1+|Du(x)|^2}=W(x),
 \qquad x\in U.
\]
\end{lemma}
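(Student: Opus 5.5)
The plan is to compute $DT$ explicitly, identify its determinant with $J$, and then read off the graph property and the slope identity from the fact that $\nu$ is a common normal of $M$ and $\widetilde M$ at corresponding points.

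\textbf{Step 1: $DT$ and its determinant.} We have $T(x)=x+t\,Du/W$. Differentiating gives
\[
DT=I+t\,D\!\left(\frac{Du}{W}\right)=I+t\,\frac{1}{W}\Bigl(I-\frac{Du\otimes Du}{W^2}\Bigr)D^2u .
\]
Set $G:=I+Du\otimes Du$, which is the matrix of the metric $g$ in the coordinates $x$. Then $G^{-1}=I-\frac{Du\otimes Du}{W^2}$. The shape operator in these coordinates is $A=G^{-1}D^2u/W$ (up to the sign convention fixed by the downward normal, which the paper's convention makes consistent with $d\nu=A\,dX$). Hence
\[
DT=I+tA,
\]
viewed as the matrix of the endomorphism $A$ in the coordinate basis $\partial_i X$. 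A change of basis does not affect the determinant, so
\[
\det DT=\det(I+tA)=\prod_i(1+t\kappa_i)=J .
\]
By \eqref{eq:A-positive} each factor satisfies $\alpha_i\ge 3/4$, so $J\ge (3/4)^n>0$. The inverse function theorem then shows that $T$ is a local diffeomorphism. Since $T$ involves $Du$, we have $T\in C^3$ because $u\in C^4$, so $T$ is in particular $C^2$.

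\textbf{Step 2: the graph over $V$.} Let $U$ be such that $T|_U$ is a $C^2$ diffeomorphism onto $V$. Define
\[
\widetilde u(y):=u(x)-\frac{t}{W(x)}, \qquad x=(T|_U)^{-1}(y).
\]
This is $C^2$ as a composition of $C^2$ maps. By construction $Y(x)=(y,\widetilde u(y))$, so $Y(U)$ is exactly the graph of $\widetilde u$ over $V$.

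\textbf{Step 3: the slope identity.} Differentiating $Y=X+t\nu$ along $x$ and using $d\nu=A\circ dX$ gives $dY=P\circ dX$, so each tangent vector $\partial_iY$ lies in $T_XM$. Therefore $\nu(x)$, which is orthogonal to $T_XM$, is also orthogonal to $\partial_i Y$. Since $P$ is invertible, the vectors $\partial_iY$ span an $n$-dimensional space, so $\nu(x)$ is a unit normal to the graph of $\widetilde u$ at $Y(x)$.

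The normals of a graph are $\pm(D\widetilde u,-1)/\sqrt{1+|D\widetilde u|^2}$. Comparing with $\nu=(Du,-1)/W$, the last components are both negative, so the sign is fixed. This forces $D_y\widetilde u(T(x))=Du(x)$, and consequently the graph factors agree: $\sqrt{1+|D_y\widetilde u|^2}=W(x)$.

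An alternative route for Step 3 is to differentiate $\widetilde u(T(x))$ directly via the chain rule and check that $D(u-t/W)=(DT)^T Du$. This holds because $D(1/W)=-D^2u\,Du/W^3$.

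\textbf{Main obstacle.} The only delicate point is Step 1: verifying that $DT$ coincides with the shape operator $I+tA$ in coordinates, with the correct sign convention for the downward normal. I would check this via the identity $(I-Du\otimes Du/W^2)=G^{-1}$.
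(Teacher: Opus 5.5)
Your argument is correct. The construction of $\widetilde u$ and the slope identity are exactly as in the paper: $\nu$ is a common normal because $dY=P\circ dX$, and you then compare downward unit normals. You obtain $\det DT=J>0$ by a genuinely different and shorter route.

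The paper proceeds in three steps. First it shows $DT$ is invertible abstractly: the tangent planes of $M$ and $\widetilde M$ coincide, and horizontal projection is injective on them because $\langle\nu,E_{n+1}\rangle=-1/W\neq0$. Next it gets $|\det DT|=J$ by computing the area element of $Y(U)$ in two ways, as $J\,W\,dx$ via $d\Phi=P$ and as $W\,|\det DT|\,dx$ via the graph parametrization. Finally it fixes the sign by continuity along $T_s(x)=x+st\,Du/W$, $s\in[0,1]$, using $\det DT_0=1$ and $\det DT_s\neq0$.

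You instead compute $DT=I+tW^{-1}G^{-1}D^2u$ with $G=I+Du\otimes Du$. You recognize $W^{-1}G^{-1}D^2u$ as the matrix of $A$ in the frame $\{\partial_iX\}$, so $\det DT=\det(I+tA)=J\geq(3/4)^n$ in one step, sign included. This needs no area formula or homotopy, and you also get an explicit formula for $DT$. The paper's route, in exchange, is coordinate-free and never has to identify the shape operator in graph coordinates.

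Your worry about the sign convention can be dropped. Since $\pi\circ X=\mathrm{id}$, one has $d\pi|_{T_XM}=(dX)^{-1}$, so $DT=d\pi\circ P\circ dX=(dX)^{-1}\circ P\circ dX$ is conjugate to $P$ regardless of conventions. Differentiating $\nu=(Du,-1)/W$ directly also confirms $A=W^{-1}G^{-1}D^2u$ under the paper's convention $d\nu=A\circ dX$.

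Your chain-rule alternative for the slope identity is valid as well. Since $G^{-1}Du=Du/W^2$, one gets $(DT)^TDu=Du+tW^{-3}D^2u\,Du=D(u-t/W)$, and invertibility of $DT$ then gives $D_y\widetilde u\circ T=Du$.
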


\begin{proof}
Let
\[
 \pi:\R^{n+1}\longrightarrow\R^n,
 \qquad
 \pi(z,z_{n+1})=z,
\]
be the horizontal projection.  Since $T=\pi\circ Y$, we have
\begin{equation}\label{eq:DT-composition}
 DT=d\pi\circ dY.
\end{equation}
We first identify the tangent space of the parallel hypersurface $\widetilde M$. 
Since $P$ is invertible, we have 
\[
\begin{aligned}
T_{Y(x)}\widetilde M
&=dY(T_xB_{10})\\
&=P\bigl(dX(T_xB_{10})\bigr)\\
&=P\bigl(T_{X(x)}M\bigr)\\
&=T_{X(x)}M.
\end{aligned}
\]
Consequently, the original and parallel tangent hyperplanes coincide and $\nu$ is the unit normal to both.

We next show that horizontal projection is an isomorphism on the parallel
tangent space.  Suppose that
\[
 Z\in T_{Y(x)}\widetilde M
 \qquad\text{and}\qquad
 \pi(Z)=0.
\]
Then $Z$ is vertical, so $Z=cE_{n+1}$ for some $c\in\R$.  Since $Z$ is
tangent to the parallel hypersurface and $\nu$ is its normal, we have 
\[
 0=\langle Z,\nu\rangle
   =c\langle E_{n+1},\nu\rangle
   =-\frac{c}{W}.
\]
From $W>0$, it follows that $c=0$.  Therefore the restriction of
$\pi$ to $T_{Y(x)}\widetilde M$ is injective.  It is a linear map between
two $n$-dimensional spaces and so is an isomorphism.  Since $dY$ is
also an isomorphism onto the parallel tangent space,
\eqref{eq:DT-composition} shows that $DT$ is invertible.  The inverse
function theorem therefore implies that $T$ is a local $C^2$
diffeomorphism.

Let $U\subset B_{10}$ be an open set such that
$T|_U:U\to V:=T(U)$ is a $C^2$ diffeomorphism and let
\[
 S:=(T|_U)^{-1}:V\longrightarrow U
\]
denote its inverse. Thus,
\[
 T\circ S=\operatorname{id}_V,
 \qquad
 S\circ(T|_U)=\operatorname{id}_U.
\]
Define $\widetilde u\in C^2(V)$ by
\begin{equation}\label{eq:utilde-definition}
 \widetilde u(y)
 :=Y^{n+1}(S(y))
 =u(S(y))-\frac{t}{W(S(y))},
 \qquad y\in V.
\end{equation}
Then, for every $y\in V$,
\[
 Y(S(y))
 =\bigl(T(S(y)),Y^{n+1}(S(y))\bigr)
 =\bigl(y,\widetilde u(y)\bigr).
\]
Since $S(V)=U$, it follows that
\[
 Y(U)=\{(y,\widetilde u(y)):y\in V\}.
\]
Hence the restriction $Y|_U$ parametrizes the graph of
$\widetilde u$ over $V$.

The downward unit normal to the graph of $\widetilde u$ over $V$ is
\[
 \widetilde\nu(y)
 =\frac{(D_y\widetilde u(y),-1)}
        {\sqrt{1+|D_y\widetilde u(y)|^2}},
 \qquad y\in V.
\]
For each $y\in V$, let $x=S(y)\in U$. By the tangent-space
identification established above, the vector
\[
 \nu(S(y))
 =\frac{(Du(S(y)),-1)}{W(S(y))}
\]
is also a unit normal to the parallel graph at
$Y(S(y))=(y,\widetilde u(y))$.  By uniqueness of the downward
unit normal at this point, we obtain
\begin{equation}\label{eq:normal-comparison}
 \frac{(D_y\widetilde u(y),-1)}
      {\sqrt{1+|D_y\widetilde u(y)|^2}}
 =
 \frac{(Du(S(y)),-1)}{W(S(y))},
 \qquad y\in V.
\end{equation}
Comparing the last components in \eqref{eq:normal-comparison} gives
\begin{equation}\label{eq:W-equality}
  \sqrt{1+|D_y\widetilde u(y)|^2}
 =W(S(y)),  \qquad y\in V.
\end{equation}
Comparing the horizontal components in
\eqref{eq:normal-comparison} and using \eqref{eq:W-equality},
we then obtain
\[
 D_y\widetilde u(y)=Du(S(y)),
 \qquad y\in V.
\]
This proves \eqref{eq:same-slope} and shows that the graph factors
of the original and parallel graphs agree at corresponding points.

We next compare two expressions for the area element of $Y(U)$.
Since \(d\Phi=P\) and \(J=\det P>0\),
\[Y^*d\tilde{\mu}=
 X^*\Phi^*d\tilde{\mu}
 =J(x)\,X^* d\mu =J(x)W(x)\,dx.
\]
On the other hand, in  $y\in V$,
the same hypersurface is parametrized by
$y\mapsto(y,\widetilde u(y))$. Hence, by \eqref{eq:W-equality}, we have 
\[ Y^*d\tilde{\mu}
 =\sqrt{1+|D_y\widetilde u(y)|^2}\,dy
 =W(S(y))\,dy.
\]
Changing variables by $y=T(x)$ with $x\in U$ and using
$S(T(x))=x$, we obtain
\[Y^* d\tilde{\mu}
 =W(x)|\det DT(x)|\,dx.\]
Comparing the two expressions in the same $x$-coordinates
and using $W(x)>0$ yields
\begin{equation}\label{eq:absolute-jacobian}
 |\det DT(x)|=J(x),
 \qquad x\in U.
\end{equation}
Since every point of $B_{10}$ belongs to such a neighborhood $U$,
this identity holds throughout $B_{10}$.

It remains to determine the sign of $\det DT$.
For $s\in[0,1]$, define
\[
 Y_s(x):=X(x)+st\nu(x),
 \qquad
 T_s(x):=x+st\frac{Du(x)}{W(x)}.\]
Then $T_s=\pi\circ Y_s$ and
\[ dY_s=P_{s}\circ dX, \qquad
 P_s:=I+stA.\]
In an orthonormal principal frame, the eigenvalues of $P_s$ are
$1+st\kappa_i$. By the semi-convexity and the choice of $t$, we see that 
\[ 1+st\kappa_i \geq 1-stK \geq 1-tK =a_0>0.
\]
Thus $P_s$ is invertible for every $s\in[0,1]$.
The same tangent-space argument used above shows that $\nu(x)$
is a unit normal to $Y_s$ at $Y_s(x)$. Since
\[ \langle\nu(x),E_{n+1}\rangle=-W(x)^{-1}\neq0,
\]
horizontal projection restricts to an isomorphism on the tangent
space of $Y_s$. Therefore,
\[ DT_s(x)=d\pi_{Y_s(x)}\circ dY_s
\]
is invertible and 
\[ \det DT_s(x)\neq0,
 \qquad x\in B_{10},\quad s\in[0,1].
\]
Fixed $x\in B_{10}$, the function
\[ s\longmapsto\det DT_s(x)
\]
is continuous and never vanishes. Moreover, from
\[ T_0=\operatorname{id},
 \qquad
 \det DT_0(x)=1,
\]
it follows that
\[
 \det DT_s(x)>0
 \qquad\text{for every }s\in[0,1].
\]
Taking $s=1$ and using \eqref{eq:absolute-jacobian}, we conclude that
\[
 \det DT(x)=J(x)>0,
 \qquad x\in B_{10}.
\]
This proves \eqref{eq:det-horizontal}.
\end{proof}

\begin{lemma}[Global graph representation]
\label{lem:ball-inclusion}
Let $B_R(x_c)\Subset B_{10}$ with $R>t$ and set
\[
 V:=B_{R-t}(x_c),
 \qquad
 U:=B_R(x_c)\cap T^{-1}(V).
\]
Then $T|_U:U\to V$ is a $C^2$ diffeomorphism.
Consequently, $Y(U)$ is the graph of a function
$\widetilde u\in C^2(V)$ over $V$.
\end{lemma}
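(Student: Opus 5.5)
We need to show that $T|_U:U\to V$ is a bijection; the graph conclusion then follows from Lemma~\ref{lem:T-jacobian}. That lemma already gives that $T$ is a local $C^2$ diffeomorphism, so $T|_U$ is a local diffeomorphism on the open set $U$. It remains to prove injectivity and surjectivity onto $V$. The basic observation is that $|T(x)-x|=t|Du|/W<t$. Hence $T(\overline{B_R(x_c)})\supset$ nothing outside a $t$-neighborhood, and $T(\partial B_R(x_c))\cap V=\emptyset$, because $|T(x)-x_c|>R-t$ for $|x-x_c|=R$.

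\textbf{Injectivity.} I would use the monotonicity of $T$. Writing $T=x+t\,\nabla\Psi$ with $\Psi=\sqrt{1+|Du|^2}$ fails, since $Du/W$ is not a gradient of $W$. Instead, $Du/W=\nabla_p G(Du)$ with $G(p)=\sqrt{1+|p|^2}$, and $D(Du/W)=D^2G(Du)\,D^2u$. So $T=\nabla\phi$ with $\phi(x)=\tfrac12|x|^2+t\,\Psi(x)$, where $\Psi$ satisfies $\nabla \Psi=Du/W$. This is not automatic. Instead I would show that $DT=I+t\,G''(Du)D^2u$ has positive-definite symmetric part. The eigenvalues of $G''$ lie in $(0,1]$, and $D^2u\ge -K$, so
\[
\xi^T DT\,\xi=|\xi|^2+t\,\xi^T G''D^2u\,\xi .
\]
This still needs care because the product is not symmetric. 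The cleaner route is to prove injectivity via properness. $T|_U:U\to V$ is proper: if $x_k\in U$ with $T(x_k)\to y\in V$, then $x_k$ stays in $\overline{B_R(x_c)}$, and by the boundary observation limit points cannot lie on $\partial B_R(x_c)$. So the limit lies in $U$. A proper local diffeomorphism onto a connected, simply connected manifold ($V$ is a ball) is a covering map, hence a diffeomorphism, provided the image is all of $V$ and $U$ is connected on each sheet.

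\textbf{Surjectivity and degree.} Since $T|_U$ is proper with open image (local diffeomorphism) and closed image (properness), and $V$ is connected, $T(U)=V$ as soon as $U\neq\emptyset$. Nonemptiness holds because $T(x_c)\in B_t(x_c)\subset V$ when $R>2t$. In general, apply the degree argument below. For injectivity I would compute the Brouwer degree $\deg(T,B_R(x_c),y)$ for $y\in V$. The homotopy $T_s=x+st\,Du/W$ ($s\in[0,1]$) satisfies $|T_s(x)-x|<t$, so $y\notin T_s(\partial B_R(x_c))$ throughout. Hence the degree equals $\deg(\mathrm{id},B_R,y)=1$. By Lemma~\ref{lem:T-jacobian}, $\det DT>0$ everywhere, so every preimage counts $+1$. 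Thus each $y\in V$ has exactly one preimage in $B_R(x_c)$, which gives bijectivity of $T|_U$ onto $V$ directly, with no covering theory needed.

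\textbf{Conclusion and main obstacle.} A bijective local $C^2$ diffeomorphism is a $C^2$ diffeomorphism, and Lemma~\ref{lem:T-jacobian} then yields that $Y(U)$ is the graph of $\widetilde u\in C^2(V)$. The key step is the degree computation. Its main subtlety is the admissibility of the homotopy, namely that $y$ stays off $T_s(\partial B_R)$. This relies on the strict bound $|Du|/W<1$ and on the positivity $\det DT>0$, both of which are already available from the previous lemma.
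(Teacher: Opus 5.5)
Your final argument is correct and is essentially the paper's proof. The homotopy $T_s=x+st\,Du/W$ keeps $y\in V$ off $T_s(\partial B_R(x_c))$, so $\deg(T,B_R(x_c),y)=1$; since $\det DT=J>0$, the preimage set is finite and each preimage counts $+1$, which gives exactly one preimage, and the inverse is $C^2$ by the inverse function theorem, as in the paper's Step 3.

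The monotonicity and covering-space detours are unnecessary and can be deleted. Also, admissibility of the homotopy uses only $|T_s(x)-x|\le t$ and $|y-x_c|<R-t$, not the strict bound $|Du|/W<1$ or $\det DT>0$. The positivity $\det DT>0$ is needed only to turn the degree into a count of preimages.
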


\begin{proof}
We divide the proof into three steps.

\medskip
\noindent
\textbf{Step 1: existence of a preimage for every point of $V$.}
For $s\in[0,1]$, define
\[T_s(x):=x+st\frac{Du(x)}{W(x)}.
\]
Then $ T_0=\operatorname{id}$ and $ T_1=T$.
Since
\[ \frac{|Du(x)|}{W(x)}
 =\frac{|Du(x)|}{\sqrt{1+|Du(x)|^2}}\leq1,\]
we have
\[ |T_s(x)-x|\leq st\leq t.\]
Fix $y\in V$. For every $x\in\partial B_R(x_c)$ and
$s\in[0,1]$, the triangle inequality gives
\begin{align*}
 | T_s(x)-y|&\geq |x-x_c|-| T_s(x)-x|-|y-x_c|\\
 &\geq R-st-|y-x_c|\\
 &\geq R-t-|y-x_c|>0.
\end{align*}
Consequently, we have 
\[
 y\notin T_s(\partial B_R(x_c))
 \qquad\text{for every }s\in[0,1].
\]
In view of  $\overline{B_R(x_c)}\subset B_{10}$, the homotopy
$ T_s$ is continuous on
$[0,1]\times\overline{B_R(x_c)}$.
The homotopy invariance of the Brouwer degree  yields
\begin{align*}
 \deg(T,B_R(x_c),y)
=\deg(\operatorname{id},B_R(x_c),y)=1,
\end{align*}
where the last equality follows from $y\in V\subset B_R(x_c)$.
Since the degree is nonzero, the equation $T(x)=y$ has at least
one solution in $B_R(x_c)$. This proves
\begin{equation}\label{eq:ball-inclusion}
 V=B_{R-t}(x_c)\subset T(B_R(x_c)).
\end{equation}

\medskip
\noindent
\textbf{Step 2: uniqueness of the preimage.}
Fix $y\in V$ and consider 
\[ E_y:=\{x\in\overline{B_R(x_c)}:T(x)=y\}.
\]
This set is compact by continuity of $T$. The boundary exclusion
proved in Step~1 shows that
\[ E_y\subset B_R(x_c).\]
Moreover, Lemma~\ref{lem:T-jacobian} gives
\[ \det DT(x)=J(x)>0
 \qquad\text{for every }x\in B_R(x_c).\]
Since $T$ is locally injective at every point of $E_y$,  every
point of $E_y$ is isolated. Then compactness of $E_y$ implies that $E_y$
is finite.
The formula for the Brouwer degree at a regular value now gives
\[ 1 =\deg(T,B_R(x_c),y) =\sum_{x\in E_y}\operatorname{sgn}\det DT(x)
 =\#E_y.\]Hence $y$ has exactly one preimage in $B_R(x_c)$.

By the definition of $U$, this unique preimage belongs to $U$.
It follows that
\[
 T|_U:U\longrightarrow V
\]
is both injective and surjective.

\medskip
\noindent
\textbf{Step 3: regularity of the inverse.}
The set $U$ is open, since $B_R(x_c)$ and $V$ are open and $T$
is continuous. Define the inverse of the bijection $T|_U$ by
\begin{equation}\label{eq:inverse-branch}
 S_{x_c,R}:=(T|_U)^{-1}:V\longrightarrow U.
\end{equation}
Then
\[
 T\circ S_{x_c,R}=\operatorname{id}_V,
 \qquad
 S_{x_c,R}\circ(T|_U)=\operatorname{id}_U.
\]

To prove that $S_{x_c,R}$ is $C^2$, fix $y_0\in V$ and let
$x_0=S_{x_c,R}(y_0)$.
Since $DT(x_0)$ is invertible, the inverse function theorem
provides open neighborhoods $U_0\subset U$ of $x_0$ and
$V_0\subset V$ of $y_0$ such that
\[
 T|_{U_0}:U_0\longrightarrow V_0
\]
is a $C^2$ diffeomorphism.
For every $y\in V_0$, both $(T|_{U_0})^{-1}(y)$ and
$S_{x_c,R}(y)$ belong to $B_R(x_c)$ and are mapped to $y$ by $T$.
The uniqueness established in Step~2 therefore implies
\[
 S_{x_c,R}|_{V_0}=(T|_{U_0})^{-1}.
\]
Thus $S_{x_c,R}$ is $C^2$ near every point of $V$ and 
$S_{x_c,R}\in C^2(V;U)$.
This proves that $T|_U$ is a $C^2$ diffeomorphism.

The graph representation of $Y(U)$ now follows from
Lemma~\ref{lem:T-jacobian}.
\end{proof}


Let
\[
 \Omega:=X(U)\subset M,\qquad
 \widetilde\Omega:=Y(U)\subset\widetilde M,
\]
where \(U\) is  selected in Lemma~\ref{lem:ball-inclusion}.
For a function \(v\) on \(\Omega\), define its corresponding function
\(v^*\) on \(\widetilde\Omega\) by
\begin{equation}\label{eq:surface-function-change}
 v^*\circ\Phi=v.
\end{equation}
This implies that 
\begin{equation}\label{eq:surface-gradient-change}
 \nabla v
 =P\bigl(\nabla v^*\bigr),
 \qquad
 \nabla v^*
 =P^{-1}\nabla v.
\end{equation}
Indeed, for every \(V\in T_pM\),
\[
 \begin{aligned}
 \langle\nabla v,V\rangle
 &=dv (V)
 =d v^*(d\Phi V)\\
 &= \langle\nabla v^*,PV\rangle\\
 &=\langle P \nabla v^*,V\rangle,
 \end{aligned}
\]
where the last equality uses the self-adjointness of \(P\).

For each fixed point \(p\in\Omega\), we set 
\[ F:=(F_{ij})=H\delta_{ij}-h_{ij}\] and 
\begin{equation}\label{eq:B-def}
\mathcal B  :=\frac1{J}PFP.
\end{equation}
Then \begin{equation}\label{eq:B-bilinear-form}
\langle \mathcal B v,w\rangle
 =\frac1{J}\langle FPv,Pw\rangle , \qquad v,w\in T_pM.
\end{equation}

\begin{proposition}[Transformation rule]
Let $n\geq 2$, $K\geq 0$, $u\in C^4(B_{10})$ and $f\in C^2(B_{10})$ with $D^2u\geq -KI$ and $\inf_{B_{10}}f >0$. If $M=\{(x,u(x)):x\in B_{10}\}\subset\R^{n+1}$ is a smooth 2-convex graph satisfying the scalar curvature equation \eqref{eq:intro-main}, then there exists some constant  $C>0$ depending only on $n$, $K$, $\inf_{B_{10}}f$, $\sup_{B_{10}}f$ and $\|Df\|_{L^{\infty}(B_{10})}$ such that
\begin{equation}\label{eq:pushed-exact}
 \operatorname{div}_{\widetilde{M}}
   (\mathcal B^* \nabla b^*)
 \geq  \operatorname{div} \mathcal Q
 -C\left(\frac HJ\circ\Phi^{-1}\right)\ \ \mbox{in}\ \ \ \widetilde\Omega
\end{equation}
in the sense of distributions, where 
\begin{equation}\label{eq:q-and-pushed-fields}
 b=\log(H+J_0),\qquad
 q:=\frac{\nabla f}{H+J_0}
 \quad\text{in }\ \Omega,
\end{equation}
and  
\begin{equation}\label{eq:push-fields}
 \mathcal B^{*}=\mathcal B \circ\Phi^{-1}, \qquad b^*:=b\circ\Phi^{-1},\qquad
 \mathcal Q : =\frac{Pq}{J}\circ\Phi^{-1}\quad \text{in}\ \ \widetilde\Omega.
\end{equation}
Furthermore, in the horizontal coordinates
\(\widetilde X(y)=(y,\widetilde u(y))\), for 
\(\beta=b^*\circ\widetilde X\),  then we have 
\begin{equation}\label{eq:pushed-euclidean-form}
 \partial_a\!\left(A^{ab}\partial_b\beta\right)
\geq
\partial_aG^a
-C\widetilde W
\left[
\left(\frac{H}{J}\right)
\circ\Phi^{-1}\circ\widetilde X
\right]
\qquad\text{in }V
\end{equation}
where $$A^{ab}
:=\widetilde W
\left\langle
\mathcal B^*\nabla_{\widetilde M}y^b,
\nabla_{\widetilde M}y^a
\right\rangle,
\qquad
G^a
:=\widetilde W\,dy^a(\mathcal Q).$$
\end{proposition}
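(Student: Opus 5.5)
The plan is to push the weak Jacobi inequality \eqref{eq:weak-test} of Proposition~\ref{prop:weak-jacobi} forward under the diffeomorphism $\Phi:\Omega\to\widetilde\Omega$. The basic tool is a change-of-variables formula for test functions and volume forms.

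Let $\psi\ge0$ be in $C_c^1(\widetilde\Omega)$ and set $\phi:=\psi\circ\Phi\in C_c^1(\Omega)$. By \eqref{eq:surface-gradient-change} we have $\nabla\phi=P\nabla\psi$ and $\nabla b=P\nabla b^*$, both evaluated at corresponding points. Since $d\Phi=P$ and $J=\det P>0$ (Lemma~\ref{lem:T-jacobian}), the area elements satisfy $\Phi^*d\tilde\mu=J\,d\mu$, i.e.\ $d\mu=J^{-1}\Phi^*d\tilde\mu$.

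With these identities each term of \eqref{eq:weak-test} transforms as follows.
\begin{itemize}
\item The principal term becomes
\[
\int_M F_{ij}b_i\phi_j\,d\mu=\int_\Omega \tfrac1J\langle FP\nabla b^*,P\nabla\psi\rangle\,d\mu=\int_{\widetilde\Omega}\langle\mathcal B^*\nabla b^*,\nabla\psi\rangle\,d\tilde\mu,
\]
by \eqref{eq:B-bilinear-form}.
\item The data term becomes
\[
\int \tfrac1{H_0}\langle\nabla f,\nabla\phi\rangle\,d\mu=\int\langle q,P\nabla\psi\rangle J^{-1}\,d\tilde\mu=\int_{\widetilde\Omega}\langle\mathcal Q,\nabla\psi\rangle\,d\tilde\mu.
\]
This uses the self-adjointness of $P$ and the fact that $Pq$ is tangent, since the tangent planes of $M$ and $\widetilde M$ coincide.
\item The error term becomes $\int H\phi\,d\mu=\int_{\widetilde\Omega}(H/J)\circ\Phi^{-1}\,\psi\,d\tilde\mu$.
\end{itemize}
The gradient-squared term $\frac1{12}\int F_{ij}b_ib_j\phi\ge0$ can simply be dropped. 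Combining these gives
\[
-\int\langle\mathcal B^*\nabla b^*,\nabla\psi\rangle\ge-\int\langle\mathcal Q,\nabla\psi\rangle-C\int (H/J)\circ\Phi^{-1}\psi,
\]
which is \eqref{eq:pushed-exact} in the distributional sense.

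Next, to obtain \eqref{eq:pushed-euclidean-form}, use the graph parametrization $\widetilde X(y)=(y,\widetilde u(y))$ over $V$ from Lemma~\ref{lem:ball-inclusion}. In these coordinates $d\tilde\mu=\widetilde W\,dy$, and for any function $w$ the chain rule gives $\nabla_{\widetilde M}w=\sum_b\partial_b(w\circ\widetilde X)\,\nabla_{\widetilde M}y^b$, where the $y^b$ are the coordinate functions on $\widetilde M$. Hence
\[
\widetilde W\langle\mathcal B^*\nabla b^*,\nabla\psi\rangle=A^{ab}\partial_b\beta\,\partial_a(\psi\circ\widetilde X).
\]
Similarly, $\widetilde W\langle\mathcal Q,\nabla\psi\rangle=\widetilde W\,d\psi(\mathcal Q)=G^a\partial_a(\psi\circ\widetilde X)$, since $d\psi(\mathcal Q)=\sum_a\partial_a(\psi\circ\widetilde X)\,dy^a(\mathcal Q)$. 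Every nonnegative $\varphi\in C^1_c(V)$ has the form $\psi\circ\widetilde X$ for a nonnegative $\psi\in C_c^1(\widetilde\Omega)$. Substituting therefore yields the weak Euclidean inequality \eqref{eq:pushed-euclidean-form}, with error term $C\widetilde W\,(H/J)\circ\Phi^{-1}\circ\widetilde X$.

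The main point to watch is bookkeeping rather than any estimate. First, $\psi\circ\Phi$ must be compactly supported in $\Omega$; this holds because $\Phi:\Omega\to\widetilde\Omega$ is a $C^2$ diffeomorphism by Lemma~\ref{lem:ball-inclusion}. Second, the operator $P$ has to be placed correctly in each pairing, acting on both $\nabla b^*$ and $\nabla\psi$ so that $\mathcal B=\frac1J PFP$ appears with the right factor of $J$. Third, regularity: $b\in C^2$ and $\Phi\in C^2$, so every integrand is continuous and the change of variables is legitimate. No new constant is introduced; $C$ is the one from Proposition~\ref{prop:weak-jacobi}.
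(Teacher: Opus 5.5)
Your proposal is correct and follows the paper's proof essentially step for step. You drop the nonnegative quadratic term in \eqref{eq:weak-test}, test with $\phi=\psi\circ\Phi$, convert each term using $\nabla v=P\nabla v^*$, \eqref{eq:B-bilinear-form} and $\Phi^*d\widetilde\mu=J\,d\mu$, and then rewrite in the graph coordinates $\widetilde X(y)=(y,\widetilde u(y))$ to obtain $A^{ab}$ and $G^a$. One slip to fix: in the middle expression of your principal-term computation, the factor $\tfrac1J$ must be paired with $\Phi^*d\widetilde\mu$ rather than $d\mu$ (equivalently, remove the $\tfrac1J$), as you did correctly in the data term.
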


\begin{proof}
In the weak Jacobi inequality \eqref{eq:weak-test}, the quadratic term
in \(\nabla b\) is nonnegative.  After dropping this term, every
nonnegative function \(\phi\in C_c^1(\Omega)\) satisfies
\begin{equation}\label{eq:weak-dropped}
 -\int_\Omega \langle F \nabla b,\nabla\phi\rangle\,d\mu
 \geq -\int_\Omega \langle q,\nabla\phi\rangle\,d\mu
 -C\int_\Omega H\phi\,d\mu
\end{equation}
for some constant  $C>0$ depending only on $n$, $K$, $\inf_{B_{10}}f$, $\sup_{B_{10}}f$ and $\|Df\|_{L^{\infty}(B_{10})}$ .
For any nonnegative function  \(0\leq\psi\in C_c^1(\widetilde\Omega)\), we set
\[ \phi:=\psi\circ\Phi\in C_c^1(\Omega).
\]
Then the corresponding function \(\phi^*=\phi\circ\Phi^{-1}\) is
exactly \(\psi\).  
The gradient transformation gives
\[
 \nabla b=P \nabla b^*,
 \qquad
 \nabla\phi=P \nabla\phi^*.
\]
Combining \eqref{eq:B-bilinear-form}  with
\[
 \Phi^*d\widetilde{\mu}=J\,d\mu, 
\]
we prove \begin{equation}\label{eq:energy-change}
 \int_\Omega \langle F \nabla b,\nabla\phi\rangle\,d\mu
 = \int_{\widetilde\Omega} \bigl\langle\mathcal B^{*} \nabla b^*,
       \nabla\phi^*\bigr\rangle \,d\widetilde{\mu}.
\end{equation}
By \[
 \begin{aligned}
 \langle\mathcal Q, \nabla\phi^*\rangle
 &= \frac1J\, (Pq, \nabla\phi^*)  \\
 &= \frac1J\,d\phi^*(Pq)
 =\frac1J\,d\phi(q)
 =\frac1J \langle q,\nabla\phi \rangle,
 \end{aligned}
\]
we have 
\begin{equation}\label{eq:flux-change}
 \int_\Omega \langle q,\nabla\phi\rangle \,d\mu
 =  \int_{\widetilde\Omega}
\langle\mathcal Q, \nabla\phi^*\rangle
 \,d\widetilde{\mu}.
\end{equation}
For the last term, the area change formula gives
\[\int_\Omega H(\psi\circ\Phi)\,d\mu
 =\int_{\widetilde\Omega}
 \left(\frac HJ\circ\Phi^{-1}\right)
 \psi\,d\widetilde{\mu}.
\]

We now express the transformed weak inequality in the horizontal
coordinates of the parallel graph.  Let
\[
\widetilde X(y)=(y,\widetilde u(y)),\qquad y\in V,
\]
and set
\[
\widetilde g_{ab}
=\delta_{ab}+\widetilde u_a\widetilde u_b,
\qquad
\widetilde W
=\sqrt{1+|D\widetilde u|^2}.
\]
For
\[
\beta:=b^*\circ\widetilde X,
\qquad
\varphi:=\psi\circ\widetilde X,
\]
we have
\[
\nabla_{\widetilde M}b^*
   =\beta_b\nabla_{\widetilde M}y^b,
\qquad
\nabla_{\widetilde M}\psi
   =\varphi_a\nabla_{\widetilde M}y^a,
\qquad
d\widetilde\mu=\widetilde W\,dy.
\]
Define
\begin{equation}\label{eq:euclidean-coefficients}
A^{ab}
:=\widetilde W
\left\langle
\mathcal B^*\nabla_{\widetilde M}y^b,
\nabla_{\widetilde M}y^a
\right\rangle,
\qquad
G^a
:=\widetilde W\,dy^a(\mathcal Q).
\end{equation}
Then
\[
\int_{\widetilde\Omega}
\left\langle
\mathcal B^*\nabla_{\widetilde M}b^*,
\nabla_{\widetilde M}\psi
\right\rangle\,d\widetilde\mu
=
\int_V A^{ab}\beta_b\varphi_a\,dy,
\]
and
\[
\int_{\widetilde\Omega}
\left\langle\mathcal Q,
\nabla_{\widetilde M}\psi
\right\rangle\,d\widetilde\mu
=
\int_V G^a\varphi_a\,dy.
\]
Consequently, for every nonnegative
$\varphi\in C_c^1(V)$,
\[
-\int_V A^{ab}\beta_b\varphi_a\,dy
\geq
-\int_V G^a\varphi_a\,dy
-C\int_V
\widetilde W
\left[
\left(\frac{H}{J}\right)
\circ\Phi^{-1}\circ\widetilde X
\right]\varphi\,dy.
\]
Equivalently,
\begin{equation}\label{eq:pushed-euclidean-form}
\partial_a\!\left(A^{ab}\partial_b\beta\right)
\geq
\partial_aG^a
-C\widetilde W
\left[
\left(\frac{H}{J}\right)
\circ\Phi^{-1}\circ\widetilde X
\right]
\qquad\text{in }V
\end{equation}
in the sense of distributions.

\end{proof}

\begin{lemma}[Uniform  ellipticity]
\label{lem:uniform-B}
There exists some constant  $C>0$ depending only on $n$, $K$, $\inf_{B_{10}}f$, $\sup_{B_{10}}f$ and $\|Df\|_{L^{\infty}(B_{10})}$ such that
\begin{equation}\label{eq:B-uniform}
 \frac{1}{C} I\leq\mathcal A^{ab} \leq C I,\qquad \|G^a\|_{L^\infty(\widetilde \Omega)}  \leq a_0^{-(n-1)}\|Df\|_{L^{\infty}(B_{10})}
\end{equation}
and 
\begin{equation}\label{eq:J-bounds}
\frac{1}{C}(1+H)\le J\le C(1+H).
\end{equation}
\end{lemma}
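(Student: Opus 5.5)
The plan is to work pointwise in an orthonormal principal frame at $p\in\Omega$. In that frame $P$, $F$ and hence $\mathcal B=\frac1J PFP$ are simultaneously diagonal:
\[
\mathcal B_{ii}=\frac{\alpha_i^2\,\sigma_{1;i}(\kappa)}{J},\qquad \alpha_i=1+t\kappa_i,\qquad J=\prod_j\alpha_j .
\]
So uniform ellipticity of $\mathcal B$ reduces to two-sided bounds on each scalar $\alpha_i^2\sigma_{1;i}/J$. Two facts are used throughout. First, $\alpha_i\ge a_0\ge 3/4$ by \eqref{eq:A-positive}. Second, $\alpha_i\le 1+tH$, since $\kappa_i<H$ in $\Gamma_2$.

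I first prove \eqref{eq:J-bounds}, splitting according to the threshold $S_0$ of Lemma~\ref{lem:large-trace-spectrum}.

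\emph{Case $H\le S_0$.} All $\kappa_i$ lie in $[-K,S_0]$, so $a_0^n\le J\le(1+tS_0)^n$. Together with the Newton--Maclaurin lower bound \eqref{eq6}, $H\ge c(n,f_0)>0$, this gives $J\approx 1+H$.

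\emph{Case $H\ge S_0$.} Lemma~\ref{lem:large-trace-spectrum} gives $H/n\le\kappa_1\le H$ and $|\kappa_i|\le C$ for $i\ge2$. Hence $\alpha_1\approx 1+H$ and $\alpha_i\in[a_0,C]$ for $i\ge2$, so again $J\approx 1+H$.

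Next I bound the diagonal entries of $\mathcal B$, using the same split.

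\emph{Case $H\le S_0$.} Here $\alpha_i$ and $J$ are bounded above and below. Lemma~\ref{lem1} gives $\sigma_{1;i}\ge f_0/H\ge f_0/S_0$, and trivially $\sigma_{1;i}\le H+K$. So every $\mathcal B_{ii}$ lies in $[1/C,C]$.

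\emph{Case $H\ge S_0$, $i=1$.} By \eqref{eq3}, $\sigma_{1;1}\approx 1/H$, while $\alpha_1^2\approx H^2$ and $J\approx H$. Hence $\mathcal B_{11}\approx H^2\cdot H^{-1}\cdot H^{-1}=1$.

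\emph{Case $H\ge S_0$, $i\ge2$.} Lemma~\ref{lem:large-trace-spectrum} gives $\sigma_{1;i}\approx H$, while $\alpha_i\approx1$ and $J\approx H$. Hence $\mathcal B_{ii}\approx1$.

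This is the key step. The difficulty is that the degenerate direction of $F$ (eigenvalue $\sim 1/H$) is exactly the direction where $P$ is large ($\alpha_1\sim H$), and the two effects cancel after dividing by $J$. Getting all of this needs the sharp spectral structure of Lemma~\ref{lem:large-trace-spectrum}, both the upper and the lower bound in \eqref{eq3}, and not merely Lemma~\ref{lem1}.

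To pass from $\mathcal B^*$ to $A^{ab}$, I write
\[
A^{ab}\xi_a\xi_b=\widetilde W\,\langle\mathcal B^*V,V\rangle,\qquad V=\xi_a\nabla_{\widetilde M}y^a .
\]
The map $\xi\mapsto V$ is the inverse metric of the graph of $\widetilde u$, with $\widetilde g^{ab}=\delta_{ab}-\widetilde u_a\widetilde u_b/\widetilde W^2$. By Lemma~\ref{lem:T-jacobian}, $\widetilde W(T(x))=W(x)$, so $\widetilde W\le\sqrt{1+\|Du\|_\infty^2}$. Therefore $|V|^2$ is comparable to $|\xi|^2$, with constants depending on $\|Du\|_{L^\infty}$, and the bounds on $\mathcal B$ transfer to $A^{ab}$. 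I note that this step introduces a dependence on $\|Du\|_{L^\infty}$. That dependence is harmless for Theorem~\ref{thm:main}, and I would state it explicitly.

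For $G^a$, I use $|dy^a(\mathcal Q)|\le|\mathcal Q|$ and
\[
|Pq|/J\le\max_i\frac{\alpha_i}{J}|q|=\max_i\Bigl(\prod_{j\ne i}\alpha_j\Bigr)^{-1}|q|\le a_0^{-(n-1)}|q| .
\]
Then I use $|q|\le|\nabla_Mf|/J_0\le\|Df\|_\infty$, since $J_0>1$. This gives the stated bound up to the factor $\widetilde W$, which is again controlled by $\|Du\|_\infty$ and is absorbed into the constant.
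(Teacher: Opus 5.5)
Your proposal is correct and follows essentially the paper's own route. You diagonalize $\mathcal B=\frac1J PFP$ in a principal frame and split at the threshold $S_0$ of Lemma~\ref{lem:large-trace-spectrum}, using both sides of \eqref{eq3} so that $\alpha_1^2\sigma_{1;1}\approx H\approx J$. You then transfer the bounds to $A^{ab}$ through $\widetilde g^{ab}$, and to $G^a$ through $\alpha_i/J\le a_0^{-(n-1)}$. Your remark that the constants really depend on $\|Du\|_{L^\infty}$, and that the bound on $G^a$ carries a factor $\widetilde W$, is also right: the paper's proof uses exactly this (through $W_0$), even though the lemma's statement omits it.
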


\begin{proof}

For fixed \(p\in\Omega\), we set \(\widetilde p=\Phi(p)\) and choose an
orthonormal principal frame \(\{e_i\}\)  such that 
\[ P e_i=\alpha_i e_i,\qquad
 F_{ij}=(H-\kappa_i)\delta_{ij}.
\]
Substituting these identities into \eqref{eq:B-def} gives
$$\mathcal B_i =\frac{\alpha_i^2(H-\kappa_i)}{J}=\frac{(1+t\kappa_i)^2(H-\kappa_i)}
{\prod_j(1+t\kappa_j)}.$$
It remains to obtain uniform upper and
lower bounds for these eigenvalues.

Choose \(S_0\geq1\) sufficiently large such that
Lemma~\ref{lem:large-trace-spectrum} can be applied to the case  \(H\geq S_0\).
We divide the argument into two cases.

\smallskip
\noindent
\textbf{Case 1: \(H\geq S_0\).}
 Lemma~\ref{lem:large-trace-spectrum} gives
\[
 \frac{H}{n}\leq\kappa_1\leq H,\qquad
 \frac{1}{CH}\leq F_{11}\leq\frac CH,
\]
and,
\[
 |\kappa_i|\leq C,\qquad
 \frac{H}{C}\leq F_{ii}\leq CH,\ \ \ i\geq 2
\]
for some constant $C>0$.
Since \(t=1/[4(K+1)]\) and
\(\alpha_i=1+t\kappa_i\geq a_0>0\), these estimates imply
\[
 \frac{1}{C}H\leq\alpha_1\leq CH,\qquad
 a_0\leq\alpha_i\leq C\quad(i\geq2).
\]
Therefore, we have 
\begin{equation}\label{eq:J-large-proof}
 \frac{H}{C}\leq J=\alpha_1\prod_{i=2}^n\alpha_i\leq CH.
\end{equation}
Moreover, we see that 
\[
 \frac{H}{C}\leq\alpha_1^2F_{11}\leq CH,
\]
and 
\[
\frac{H}{C}\leq\alpha_i^2F_{ii}\leq CH\ \ \, i\geq 2.
\]
Division by \eqref{eq:J-large-proof} yields
\[
 \frac{1}{C}\leq\mathcal B_i\leq C
 \qquad i\geq 1.
\]

\smallskip
\noindent
\textbf{Case 2: \(0<H\leq S_0\).}
By Lemma \ref{lem1}, we have 
\[
 F_{ii}\geq\frac fH\geq\frac{\inf_{B_{10}}f}{S_0}.
\]
The semi-convexity bound \(\kappa_i\geq-K\) also gives
\[
 F_{ii}=H-\kappa_i\leq S_0+K.
\]
Moreover, \(|\kappa|<H\leq S_0\) and \(\alpha_i\geq a_0\) imply
\[
 a_0\leq\alpha_i\leq1+tS_0,
 \qquad
 a_0^n\leq J\leq(1+tS_0)^n.
\]
Consequently,
\[
 \frac{a_0^2f_0}{S_0(1+tS_0)^n}
 \leq\mathcal B_i
 \leq
 \frac{(1+tS_0)^2(S_0+K)}{a_0^n}.
\]
Combining the two cases proves \eqref{eq:B-uniform} and \eqref{eq:J-bounds}.

Set
\[
\vartheta_i^a:=dy^a(e_i),
\qquad
\mathcal Q_i:=\langle\mathcal Q,e_i\rangle.
\]
Since
\[
\nabla_{\widetilde M}y^a
=\sum_i\vartheta_i^a e_i,
\]
the coefficients in \eqref{eq:euclidean-coefficients} become
\[
A^{ab}
=\widetilde W\sum_i
\mu_i\vartheta_i^a\vartheta_i^b,
\qquad
G^a
=\widetilde W\sum_i
\mathcal Q_i\vartheta_i^a.
\]
Therefore, for every $\xi\in\mathbb R^n$,
\[
A^{ab}\xi_a\xi_b
=
\widetilde W\sum_i\mu_i
\left(\sum_a\vartheta_i^a\xi_a\right)^2.
\]
Moreover,
\[
\sum_i
\left(\sum_a\vartheta_i^a\xi_a\right)^2
=
\widetilde g^{ab}\xi_a\xi_b.
\]
It follows that
\[
c_0\widetilde W\,\widetilde g^{ab}\xi_a\xi_b
\leq
A^{ab}\xi_a\xi_b
\leq
C_0\widetilde W\,\widetilde g^{ab}\xi_a\xi_b.
\]
Since
\[
\widetilde g^{ab}
=\delta_{ab}
-\frac{\widetilde u_a\widetilde u_b}{\widetilde W^2},
\]
we obtain
\[
\frac{c_0}{\widetilde W}|\xi|^2
\leq
A^{ab}\xi_a\xi_b
\leq
C_0\widetilde W|\xi|^2.
\]
Finally, the slope bound gives
\[
1\leq\widetilde W
\leq
W_0:=\sqrt{1+\|Du\|_{L^\infty(B_{10})}^2},
\]
and hence
\[
\frac{c_0}{W_0}|\xi|^2
\leq
A^{ab}\xi_a\xi_b
\leq
C_0W_0|\xi|^2.
\]
Thus $(A^{ab})$ is uniformly elliptic in the Euclidean coordinates.

Furthermore,
\[
|G|
=\widetilde W\,|d\pi(\mathcal Q)|
\leq
\widetilde W|\mathcal Q|
\leq
W_0\|\mathcal Q\|_{L^\infty(\widetilde\Omega)},
\]
while $\widetilde W H/J$ is uniformly bounded.  
We next estimate \(\mathcal Q\). 
By \(H+J_0\geq1\), it follows that
\[
 |q|
 =\frac{|\nabla f|}{H+J_0}
 \leq \|Df\|_{L^{\infty}(B_{10})}.
\]
Also, combining this  with 
\[
 \frac{\alpha_i}{J}
 =\frac1{\prod_{j\ne i}\alpha_j}
 \leq a_0^{-(n-1)},
\]
we see that 
\[
 \|\mathcal Q\|_{L^\infty(\widetilde \Omega)}
 \leq a_0^{-(n-1)}\|Df\|_{L^{\infty}(B_{10})}.
\]
\end{proof}

We introduce the following  local boundedness estimate from \cite[Theorem 8.17]{GT83} and \cite{HL11}.

\begin{lemma}\label{lem:DG}
Let \(G\in L^\infty(B_1;\mathbb R^n)\), \(E\in L^\infty(B_1)\) and
let \(\mathbb A=(a^{ij})\) satisfy
\[
\lambda_0 I\leq\mathbb A\leq\Lambda_0I.
\]
Suppose that \(v\in W^{1,2}(B_1)\) satisfies
\[
\operatorname{div}(\mathbb A Dv)
\geq \operatorname{div}G-E
\qquad\text{in }B_1.
\]
Then
\begin{equation*}
 \sup_{B_{1/2}}u
 \le C\left(1+\|v_+\|_{L^1(B_1)}
 +\|G\|_{L^\infty(B_1)}+\|E\|_{L^\infty(B_1)}\right),
\end{equation*}
where $C>0$ depends only on $n$, $\lambda_0$ and $\Lambda_0$.
\end{lemma}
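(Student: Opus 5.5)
The statement is the standard local boundedness (weak Harnack) estimate for subsolutions of divergence-form equations with a divergence right-hand side, as in \cite[Theorem 8.17]{GT83}. I will prove it by De Giorgi--Moser iteration followed by an $L^2$-to-$L^1$ reduction. (The conclusion $\sup_{B_{1/2}}u$ should be read as $\sup_{B_{1/2}}v$.)

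\emph{Step 1: normalization.} First I normalize the data. Set $k:=\|G\|_{L^\infty(B_1)}+\|E\|_{L^\infty(B_1)}$ and let $w:=v-k\ge$ level shifts. For any $\ell\ge k$, test the inequality with $\varphi=\eta^2(v-\ell)_+^{p}$, where $p\ge1$ and $\eta\in C_c^1(B_1)$ is a cutoff. Testing gives
\[
\int \eta^2 \langle \mathbb A Dv, D(v-\ell)_+^p\rangle \le \int \langle G, D\varphi\rangle - \int \langle \mathbb A Dv,2\eta D\eta\rangle (v-\ell)_+^p+\int E\varphi .
\]
The $G$-term is handled by Young's inequality against the ellipticity $\lambda_0$. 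Since $\ell\ge k$, the lower-order contributions are dominated by $\int \eta^2 (v-\ell)_+^{p-1}\bigl(1+(v-\ell)_+\bigr)\cdot$const. This is the usual device of replacing $(v-\ell)_+$ by $\bar v:=(v-\ell)_++k$, so that $|G|^2\le k\bar v\cdot$const and $|E|\le \bar v/k\cdot$const hold with constants independent of $k$.

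\emph{Step 2: Moser iteration.} Caccioppoli gives $\int|D(\eta\bar v^{(p+1)/2})|^2\le C(p+1)^2\int(|D\eta|^2+\eta^2)\bar v^{p+1}$. Combining this with the Sobolev inequality and iterating over shrinking radii $r_j=\tfrac12+2^{-j-1}r$ yields
\[
\sup_{B_{\theta r}}\bar v\le C(1-\theta)^{-n/2}r^{-n/2}\|\bar v\|_{L^2(B_r)} .
\]
If $n=2$, I use Sobolev with any finite exponent instead.

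\emph{Step 3: from $L^2$ to $L^1$.} This is the step that needs care. Write $M(r)=\sup_{B_r}\bar v$. Interpolation gives $\|\bar v\|_{L^2(B_r)}\le M(r)^{1/2}\|\bar v\|_{L^1(B_r)}^{1/2}$. Applying Step 2 between radii $s<r$ in $[1/2,1]$ with Young's inequality gives
\[
M(s)\le \tfrac12 M(r)+C(r-s)^{-n}\|\bar v\|_{L^1(B_1)} .
\]
The standard iteration lemma (\cite[Lemma 4.3]{HL11}) then yields $M(1/2)\le C\|\bar v\|_{L^1(B_1)}$. Note that $\|\bar v\|_{L^1}\le\|v_+\|_{L^1}+C k$, since $\ell=k$ and $\bar v\le v_++k$ where $v>k$. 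Hence
\[
\sup_{B_{1/2}}v\le k+M(1/2)\le C\bigl(1+\|v_+\|_{L^1(B_1)}+\|G\|_\infty+\|E\|_\infty\bigr).
\]

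The only real technical point is the absorption of the $\operatorname{div}G$ term. The $k$-shift in Step 1 handles it uniformly, because $G$ enters only through $\|G\|_\infty^2\le k\cdot\|G\|_\infty$. Everything else is routine. Since $v\in W^{1,2}$, all the test functions are admissible after truncating $\bar v$ at a large level and then letting the truncation level tend to infinity.
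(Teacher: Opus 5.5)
Your proposal is correct and follows the same route as the paper, which gives no proof of its own and simply cites \cite[Theorem 8.17]{GT83} (Moser iteration with the shift $\bar u=u^++k$ absorbing the $\operatorname{div}G$ and $E$ terms) and \cite{HL11} (interpolation plus the absorption lemma to pass from $L^2$ to $L^1$); your Steps 1--3 reproduce exactly that argument, and you are right that $\sup_{B_{1/2}}u$ is a typo for $\sup_{B_{1/2}}v$. Three small repairs are needed. First, take the test function to be $\eta^2(\bar v^{p}-k^{p})$ rather than $\eta^2(v-\ell)_+^p$, so that $\bar v$ appears on both sides of the Caccioppoli inequality. Second, the $E$-bound should read $|E|\,\bar v^{p}\le(\|E\|_\infty/k)\,\bar v^{p+1}\le\bar v^{p+1}$. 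Third, apply the absorption lemma on $[1/2,\tau_1]$ with $\tau_1<1$, where $M$ is known to be finite.
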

\bigskip

Since \(J_0>1\) and \(H>0\), we have \(b=\log(H+J_0)\ge0\).
Fix \(x_0\in B_{1/2}\) and set \(y_0=T(x_0)\). Since
\(|y_0-x_0|\le t \le1/4\), if $|y-y_0|<\frac18$, then 
\[ |y-x_0|<\frac18+t\le\frac34-t.\]
Thus, applying Lemma~\ref{lem:ball-inclusion} to \(B_{3/4}(x_0)\) yields
\begin{equation}\label{eq:fixed-transformed-ball} 
 B_{1/8}(y_0)\subset B_{3/4-t}(x_0)
 \subset T(B_{3/4}(x_0)).
\end{equation}
Let \(S=S_{x_0,3/4}\) be the inverse branch supplied by
Lemma~\ref{lem:ball-inclusion} and by $\widetilde X(y)=\Phi(X(S(y)))$,  we have 
\[
 \beta(y)=b^*(\widetilde X(y))=b(X(S(y))).
\]
Applying Lemma~\ref{lem:DG} to
\eqref{eq:pushed-euclidean-form} gives
\begin{equation}\label{eq:mean-value-y}
 b(X(x_0))=\beta(y_0)
 \le C\left(1+
       \int_{B_{1/8}(y_0)}\beta(y)\,dy\right).
\end{equation}
Finally, \(\det DT=J>0\), \(d\mu=Wdx\), and
\eqref{eq:J-bounds} imply
\begin{align}
 \int_{B_{1/8}(y_0)}\beta(y)\,dy
 &=\int_{S(B_{1/8}(y_0))}b(X(x))J(x)\,dx\notag\\
 &={
 \int_{X(S(B_{1/8}(y_0)))}
       b\,\frac{J}{W}\,d\mu_g}\notag\\
 &\le C\int_{M\cap\{x\in B_{5/4}\}}
       b(1+H)\,d\mu.                         \label{eq:mean-change}
\end{align}
Consequently,
\begin{equation}\label{eq:mean-reduction}
 \sup_{x_0\in B_{1/2}}b(X(x_0))
 \le C\left(1+
 \int_{M\cap\{x\in B_{5/4}\}}b(1+H)\,d\mu_g\right).
\end{equation}

\section{Weighted integral closure and proof of the theorem}
\noindent

It remains to control the weighted integral in
\eqref{eq:mean-reduction}.

\begin{lemma}\label{lem:closure}
Let $n\geq 2$, $K\geq 0$, $u\in C^4(B_{10})$ and $f\in C^2(B_{10})$ with $D^2u\geq -KI$ and $\inf_{B_{10}}f >0$. If $M=\{(x,u(x)):x\in B_{10}\}\subset\R^{n+1}$ is a smooth 2-convex graph satisfying the scalar curvature equation \eqref{eq:intro-main}, then there exists some constant  $C>0$ depending only on $n$, $K$, $\inf_{B_{10}}f$, $\sup_{B_{10}}f$, $\|Df\|_{L^{\infty}(B_{10})}$ and $\|Du\|_{L^{\infty}(B_{10})}$ such that
\begin{equation}\label{eq:closure}
 \int_{M\cap\{x\in B_{5/4}\}}b(1+H)\,d\mu_g
 \le C.
\end{equation}
\end{lemma}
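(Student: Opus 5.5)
Fix a cutoff $\eta\in C_c^1(B_2)$ with $0\le\eta\le1$, $\eta\equiv1$ on $B_{5/4}$ and $|D\eta|\le C$. Since $b\ge0$, it suffices to bound
\[
 \int_M \eta^2\, b\,(1+H)\,d\mu \le C .
\]
The part $\int_M\eta^2 b\,d\mu$ is harmless. Indeed $b=\log(H+J_0)\le H+J_0$, and both $\int_{M\cap B_2}H\,d\mu$ and $\mu(M\cap B_2)$ are bounded by \eqref{eq:L1-s} and \eqref{eq:graph-volume}. The whole difficulty is the term $\int\eta^2 bH\,d\mu$. Here we use the divergence structure of $H$, as in the proof of Proposition~\ref{prop:energy}, and write
\[
 H\,d\mu = \Div\!\left(\frac{Du}{W}\right)W\,dx .
\]
Integrating by parts in the flat coordinates gives
\[
 \int_{B_2}\eta^2 b\,H\,W\,dx=-\int_{B_2}\frac{Du}{W}\cdot D\bigl(\eta^2 bW\bigr)\,dx .
\]
This moves derivatives onto $b$, $\eta$ and $W$. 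The $D\eta$ term is bounded by $C\int\eta b\,dx$, which was already handled. The $DW$ term involves $D^2u$. I would instead use the intrinsic form $H=\Div_M(\nu^T)$ with the tangential field $-\nabla_M x_{n+1}$, i.e. $\Delta_M x_{n+1}=H\langle\nu,E_{n+1}\rangle=-H/W$ (sign conventions aside). This gives
\[
 \int_M \eta^2 b\,\frac{H}{W}\,d\mu=\pm\int_M\langle\nabla_M x_{n+1},\nabla_M(\eta^2 b)\rangle\,d\mu ,
\]
and since $W\le W_0$, the left side controls $\int\eta^2 bH\,d\mu$ up to the constant $W_0$. On the right, $|\nabla_M x_{n+1}|\le1$, so we only need to control $\int_M\eta|\nabla\eta|\,b\,d\mu$, which is fine, and
\[
 \int_M\eta^2|\nabla b|\,d\mu .
\]

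To bound $\int\eta^2|\nabla b|$ we use the Jacobi energy of Proposition~\ref{prop:energy} together with Cauchy--Schwarz weighted by $F$:
\[
 |\nabla b|\le \bigl(F_{ij}b_ib_j\bigr)^{1/2}\,\bigl(\tr F^{-1}\bigr)^{1/2}.
\]
By Lemma~\ref{lem1}, $\tr F^{-1}\le nH/\inf f$. Hence
\[
 \int\eta^2|\nabla b|\le \Bigl(\int\eta^2F_{ij}b_ib_j\Bigr)^{1/2}\Bigl(\int\eta^2\,\tr F^{-1}\Bigr)^{1/2}\le C\Bigl(\int_{M\cap B_2}H\,d\mu\Bigr)^{1/2}\le C .
\]
The first factor is bounded by \eqref{eq:energy-fixed}, and the second by \eqref{eq:L1-s}. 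This closes the estimate. The main obstacle is making the integration-by-parts identity for $H$ against the weight $\eta^2 b$ precise. This includes the sign convention for the downward normal (so that $H>0$ corresponds to the right sign of $\Delta_M x_{n+1}$) and checking that $1/W\ge 1/W_0$ lets us recover $\int\eta^2 bH$ from the weighted version. If the intrinsic route is awkward, the fallback is the flat identity with $\eta^2 b$ alone as the test function: $\int H\eta^2 b\,dx=-\int \frac{Du}{W}\cdot D(\eta^2 b)\,dx$. The quantity $|Db|$ in flat coordinates is comparable to $|\nabla_M b|$ up to factors of $W_0$, and $d\mu$ is comparable to $dx$. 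The same Cauchy--Schwarz argument against $F$ then applies.
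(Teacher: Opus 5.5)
Your proposal is correct and follows essentially the same route as the paper. The paper likewise pulls out $\|W\|_{L^\infty}$ using $bH\ge0$ and integrates $\Div(Du/W)$ by parts in flat coordinates against the cutoff times $b$. Its tangential field $\zeta=W^{-2}\sum_i u_iX_i$ is exactly your $\nabla_M x_{n+1}$, so your intrinsic identity (with the paper's conventions $\Delta_M x_{n+1}=H/W$, and the sign is irrelevant since you only bound absolute values) coincides with the flat one. The closing step is the same weighted Cauchy--Schwarz against $F$, combined with Proposition~\ref{prop:energy}, the bound $F^{-1}\le (H/\inf f)I$ from Lemma~\ref{lem1}, and \eqref{eq:L1-s}; your pointwise bound $|\nabla b|\le (F_{ij}b_ib_j)^{1/2}(\tr F^{-1})^{1/2}$ is only a cosmetic variant of the paper's pairing with $\langle F^{-1}\zeta,\zeta\rangle$.
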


\begin{proof}
For the downward normal \(\nu=(Du,-1)/W\) and the convention
\(d\nu=A\), we know that
\[
 H=\operatorname{div} \!\left(\frac{Du}{W}\right),
 \qquad d\mu =W\,dx.
\]
Since $b=\log(H+J_0)$ and $J_0\geq1$,
\begin{equation}\label{eq:log-linear}
                         0\leq b\leq H+J_0.
\end{equation}
Combining this with  \eqref{eq:L1-s}   gives
\begin{equation}\label{eq:b-L1}
 \int_{M\cap\{x\in B_{3/2}\}}b\,d\mu \leq C,
\end{equation}
where $C>0$ depends only on $n$, $K$, $\inf_{B_{10}}f$, $\sup_{B_{10}}f$ and $\|Du\|_{L^{\infty}(B_2)}$.

It remains to estimate the term containing \(bH\). Choose
\(\chi\in C_c^1(B_{3/2})\) with \(0\le\chi\le1\) and
\(\chi=1\) on \(B_{5/4}\). Since \(bH\ge0\), we get 
\begin{align}
 \int_{M\cap\{x\in B_{5/4}\}}bH\,d\mu
 &\le \|W\|_{L^\infty(B_{3/2})}
       \int_{B_{3/2}}\chi bH\,dx\notag\\
 &\le \|W\|_{L^\infty(B_{3/2})}\left(\left|
       \int_{B_{3/2}}\chi D b\cdot\frac{Du}{W}\,dx\right|
       +\int_{B_{3/2}}b|D\chi|\,dx\right).          \label{eq:closure-IBP}
\end{align}
In the graph coordinates, we have 
\[\zeta=\frac{1}{W^2}\sum_{i=1}^n u_iX_i,
\qquad
|\zeta|\leq1,
\qquad
\langle\nabla_M b,\zeta\rangle
=\frac{D\bar b\cdot Du}{W^2},
\]
where \(\bar b:=b\circ X\).
Since \(d\mu=W\,dx\), we have
\[
\int_{B_{3/2}}\chi D\bar b\cdot\frac{Du}{W}\,dx
=
\int_{M_{3/2}}\chi
\langle\nabla_M b,\zeta\rangle\,d\mu.
\]
Therefore, the weighted Cauchy--Schwarz inequality gives
\begin{align*}
\left|
\int_{B_{3/2}}\chi D\bar b\cdot\frac{Du}{W}\,dx
\right|
&\leq
\left(
\int_{M\cap \{x\in B_{3/2}\}}\chi^2
\langle F\nabla_Mb,\nabla_Mb\rangle\,d\mu
\right)^{1/2} \\
&\quad\times
\left(
\int_{M\cap \{x\in B_{3/2}\}}
\langle F^{-1}\zeta,\zeta\rangle\,d\mu
\right)^{1/2}.
\end{align*}
The first factor is bounded by Proposition~\ref{prop:energy} applied
with \(\eta=\chi\).
Moreover, Lemma~\ref{lem1} implies
\[
F^{-1}\leq\frac{H}{\inf_{B_{10}}f}\,I.
\]
Since \(|\zeta|\leq1\), it follows that
\[\int_{M\cap \{x\in B_{3/2}\}}\langle F^{-1}\zeta,\zeta\rangle \,d\mu
\leq \frac{1}{\inf_{B_{10}}f} \int_{M\cap \{x\in B_{3/2}\}}H\,d\mu
\leq C.
\]
Consequently,
\[\left|\int_{B_{3/2}}\chi D\bar b\cdot\frac{Du}{W}\,dx\right|
\leq C
\]
for some constant $C>0$ depending only on $n$, $K$, $\inf_{B_{10}}f$, $\sup_{B_{10}}f$, $\|Df\|_{L^{\infty}(B_{10})}$ and $\|Du\|_{L^{\infty}(B_{10})}$. Substitution into
\eqref{eq:closure-IBP} gives
\[
 \int_{M\cap\{x\in B_{5/4}\}}bH\,d\mu \le C.
\]
Together with \eqref{eq:b-L1}, this proves \eqref{eq:closure}.
\end{proof}

\begin{proof}[Proof of Theorem~\ref{thm:main}]
Combining \eqref{eq:mean-reduction} with Lemma~\ref{lem:closure} gives
\[
 \sup_{x\in B_{1/2}}b(X(x))\le C,
\]
where $C>0$ depends only on $n$, $K$, $\inf_{B_{10}}f$, $\sup_{B_{10}}f$, $\|Df\|_{L^{\infty}(B_{10})}$ and $\|Du\|_{L^{\infty}(B_{10})}$
Since \(b=\log(H+J_0)\),
\[
 \sup_{x\in B_{1/2}}H(x)\le e^C.
\]
In view of the identity \(|A|^2=H^2-2f\) and \(f>0\), we conclude that 
\[
 \sup_{B_{1/2}}|A|\leq \sup_{B_{1/2}} H\le C.
\]
\end{proof}

\section*{Acknowledgments}
 The authors acknowledge the use
of AI tools. All mathematical arguments and proofs in the final manuscript were independently verified and written by the authors.


\begin{thebibliography}{99}

\bibitem{ChenJianTuZhou}
R.~Chen, H.~Jian, X.~Tu, and X.~Zhou,
\emph{Regularity for convex viscosity solutions of the $\sigma_2$ equation},
arXiv:2605.30823 (2026).

\bibitem{ChenJianZhou}
R.~Chen, H.~Jian, and X.~Zhou,
\emph{An integral approach to prescribing scalar curvature equations},
Math. Ann. \textbf{394} (2026), Article No.~72.

\bibitem{CZZ26}
R. Chen, X. Zhou, and R. Zhu,
\emph{Interior $C^{2,\alpha}$ regularity for the quadratic Hessian equation},
arXiv preprint arXiv:2608.29484, 2026.

\bibitem{Fan}
Z.~Fan,
\emph{Hessian estimates for the $\sigma_2$ equation with variable right-hand side terms in dimension four},
Adv. Math. \textbf{494} (2026), Article No.~110953.

\bibitem{FanShankar}
Z. Fan and R. Shankar,
\emph{Interior estimates and regularity for the scalar curvature equation
in dimension four},
arXiv:2608.22748 (2026).

\bibitem{GT83}
D. Gilbarg and N.~S. Trudinger,
\emph{Elliptic Partial Differential Equations of Second Order},
second ed., Grundlehren Math. Wiss. 224, Springer, Berlin, 1983.

\bibitem{GuanQiu}
P.~Guan and G.~Qiu,
\emph{Interior $C^2$ regularity of convex solutions to prescribing scalar curvature equations},
Duke Math. J. \textbf{168} (2019), 1641--1663.

\bibitem{HL11}
Q. Han and F. Lin,
\emph{Elliptic Partial Differential Equations},
second ed., Courant Lecture Notes in Mathematics 1,
American Mathematical Society, Providence, RI, 2011.

\bibitem{Heinz}
E.~Heinz,
\emph{On elliptic Monge--Amp\`ere equations and Weyl's embedding problem},
J. Analyse Math. \textbf{7} (1959), 1--52.
\bibitem{QiuYan}
G.~Qiu and J.~Yan,
\newblock Interior curvature estimates for the graphical scalar curvature
equation in all dimensions,
\newblock arXiv:2609.02581, 2026.

\bibitem{JiLiang26}
K. Ji and L. Liang,
\emph{Interior Hessian estimates for semiconvex solutions of the
$\sigma_2/\sigma_1$ equation with Lipschitz right-hand sides},
arXiv preprint arXiv:2608.24530, 2026.

\bibitem{LiWuQuadratic}
Z.~Li and K.~Wu,
\emph{Interior Hessian estimates for the quadratic Hessian equation},
arXiv:2608.23233 preprint (2026).

\bibitem{McGonagleSongYuan}
M.~McGonagle, C.~Song, and Y.~Yuan,
\emph{Hessian estimates for convex solutions to quadratic Hessian equation},
Ann. Inst. H. Poincar\'e C Anal. Non Lin\'eaire \textbf{36} (2019), 451--454.

\bibitem{Mooney}
C.~Mooney,
\emph{Strict $2$-convexity of convex solutions to the quadratic Hessian equation},
Proc. Amer. Math. Soc. \textbf{149} (2021), 2473--2477.

\bibitem{Nir53}
L. Nirenberg,
\emph{The Weyl and Minkowski problems in differential geometry in the large},
Comm. Pure Appl. Math. \textbf{6} (1953), 337--394.
MR0058265.

\bibitem{Pog73}
A. V. Pogorelov,
\emph{Extrinsic Geometry of Convex Surfaces},
Translations of Mathematical Monographs, Vol.~35,
American Mathematical Society, Providence, RI, 1973.
Translated from the Russian by the Israel Program for Scientific
Translations.
MR0346714.

\bibitem{Pog78}
A. V. Pogorelov,
\emph{The Minkowski Multidimensional Problem},
Scripta Series in Mathematics,
V.~H. Winston \& Sons, Washington, DC;
Halsted Press (John Wiley \& Sons), New York--Toronto--London, 1978.
Translated from the Russian by Vladimir Oliker,
with an introduction by Louis Nirenberg.
MR0478079.

\bibitem{QiuCurvature}
G.~Qiu,
\emph{Interior curvature estimates for hypersurfaces of prescribing scalar curvature in dimension three},
Amer. J. Math. \textbf{146} (2024), 579--605.

\bibitem{QiuHessian}
G.~Qiu,
\emph{Interior Hessian estimates for $\sigma_2$ equations in dimension three},
Front. Math. \textbf{19} (2024), 577--598.

\bibitem{ShankarYuanSemiconvex}
R.~Shankar and Y.~Yuan,
\emph{Hessian estimate for semiconvex solutions to the $\sigma_2$ equation},
Calc. Var. Partial Differential Equations \textbf{59} (2020), Paper No.~30.

\bibitem{ShankarYuanFour}
R.~Shankar and Y.~Yuan,
\emph{Hessian estimates for the $\sigma_2$ equation in dimension four},
Ann. of Math. (2) \textbf{201} (2025), 489--513.

\bibitem{Urbas}
J.~I. E. Urbas,
\emph{On the existence of nonclassical solutions for two classes of fully nonlinear elliptic equations},
Indiana Univ. Math. J. \textbf{39} (1990), 355--382.

\bibitem{WarrenYuan}
M.~Warren and Y.~Yuan,
\emph{Hessian estimates for the $\sigma_2$ equation in dimension three},
Comm. Pure Appl. Math. \textbf{62} (2009), 305--321.

\bibitem{Zhou}
X.~Zhou,
\emph{Notes on generalized special Lagrangian equation},
Calc. Var. Partial Differential Equations \textbf{63} (2024), Paper No.~197.

\bibitem{ZZ26}
X. Zhou and R. Zhu,
\emph{Interior $C^{2,\alpha}$ regularity for convex solutions of the
$2$-Hessian equation},
arXiv preprint arXiv:2608.24604, 2026.

\end{thebibliography}
\end{document}